\newtheorem{thm}{Theorem}[section]
\newtheorem{lem}[thm]{Lemma}
\newtheorem{prop}[thm]{Proposition}
\newtheorem{prob}[thm]{Problem}
\newtheorem{ques}[thm]{Question}
\newenvironment{proof}{\noindent {\bf
Proof.}}{\rule{3mm}{3mm}\par\medskip}
\def\marker{\>\hbox{${\vcenter{\vbox{
    \hrule height 0.4pt\hbox{\vrule width 0.4pt height 6pt
    \kern6pt\vrule width 0.4pt}\hrule height 0.4pt}}}$}\>}
\def\qed{ \hfill $\square$}
\newcommand{\w}{\chi'_{\rm{wo}}}
\begin{document}
\title{ Weak-odd chromatic index of special digraph classes }
\author{\small {Ruijuan Gu$^1$, Hui Lei$^2$\thanks{The corresponding author.}, Xiaopan Lian$^3$,  Zhenyu Taoqiu$^3$}\\
{\small $^1$ Sino-European Institute of Aviation Engineering}\\
{\small Civil Aviation University of China, Tianjin 300300, China}\\
{\small $^2$ School of Statistics and Data Science, LPMC and KLMDASR}\\
{\small Nankai University, Tianjin 300071, China}\\
{\small $^3$ Center for Combinatorics and LPMC}\\
{\small Nankai University, Tianjin 300071, China}\\
{\small Email: millet90@163.com; hlei@nankai.edu.cn;}\\ {\small xiaopanlian@mail.nankai.edu.cn; tochy@mail.nankai.edu.cn}}

\maketitle
\begin{abstract}
Give a digraph $D=(V(D),A(D))$, let $\partial^+_D(v)=\{vw|w\in N^+_D(v)\}$ and $\partial^-_D(v)=\{uv|u\in N^-_D(v)\}$ be semi-cuts of $v$. A mapping $\varphi:A(D)\rightarrow [k]$ is called a {\it weak-odd $k$-edge coloring} of $D$ if it satisfies the condition: for each $v\in V(D)$, there is at least one color  with an odd number of occurrences on each non-empty semi-cut of $v$. We call the minimum integer $k$ the {\it weak-odd chromatic index} of $D$.
When limit to 2 colors, use $def(D)$ to denote the {\it defect} of $D$, the minimum number of vertices in $D$ at which the above condition is not satisfied.
In this paper, we give a descriptive characterization about the weak-odd chromatic index and the defect of semicomplete digraphs and extended tournaments, which generalize results of tournaments to broader classes. And we initiated the study of weak-odd edge covering on digraphs.

\noindent\textbf{Keywords:} weak-odd edge coloring; weak-odd edge covering; semicomplete digraph; extended tournament\\

\end{abstract}

\section{Introduction}
Throughout the paper, we follow the terminology and notion from \cite{BG,JU}. Here all digraphs considered are finite.


Give a graph $G=(V(G),E(G))$. Denote by $d_G(v)$ the number of edges incident with $v$ in $G$. 
A mapping $\varphi:E(G)\rightarrow [k]$ is called a {\it weak-odd k-edge coloring} of $G$ if it satisfies the following condition:
\begin{description}
	\item[(\rm WO)] For $v\in V(G)$ with $d_G(v)>0$, there is at least one color $i\in[k]$ such that the number of edges incident with $v$ colored by $i$ is odd.
\end{description}
We should note that this concept is a relaxation of odd edge coloring of graphs which was first introduced by Pyber in \cite{LP}. The {\it odd edge coloring} is an edge-coloring  such that at each non-isolated vertex
every appearing color is odd. The {\it weak-odd chromatic index} of $G$, denoted by $\w(G)$, is the minimum integer $k$ such that $G$ admits a weak-odd $k$-edge coloring. This concept motivated by \cite{JS,JSFR,LP} is given in \cite{MP}, where   Mirko gave an intuitive characterization of graphs in terms of their weak-odd chromatic index.


Inspired by the study of graphs, Petru\v{s}evski and \v{S}krekovski \cite{MR} generalized this concept to digraphs.
Given a digraph $D=(V(D),A(D))$, let $\partial^+_D(v)=\{vw|w\in N^+_D(v)\}$ and $\partial^-_D(v)=\{uv|u\in N^-_D(v)\}$ be {\it semi-cuts} of $v$. The {\it out-degree}(resp. {\it in-degree}) of $v$ which is also called the {\it semi-degree} of $v$, denoted by $d^+_D(v)$(resp. $d^-_D(v)$), is the cardinality of the set $\partial^+_D(v)$(resp. $\partial^-_D(v)$). We say a vertex $v\in V(D)$ is a {\it peripheral} vertex if either $d^+_D(v)=0$ or $d^-_D(v)=0$. Specifically, if $d^+_D(v)=0$, then $v$ is a {\it sink} of $D$, and if $d^-_D(v)=0$, then $v$ is {\it source}.
A mapping $\varphi:A(D)\rightarrow [k]$ is said to be a {\it weak-odd k-edge coloring} of $D$ if the following holds:
\begin{description}
	\item[($\overrightarrow{\rm{WO}}$)] For any $v\in V(D)$, there is at least one color $i\in[k]$ such that the number of arcs in each nonempty semi-cut of $v$ colored by $i$ is odd.
\end{description}
We say such $D$ {\it weak-odd $k$-edge colorable}, and call the suitable minimum integer $k$  {\it weak-odd chromatic index}, denoted by $\w(D)$.



In the same paper,
the authors showed that $\w(D)\le 3$ and the bound is sharp. They believed that a descriptive characterization similar to graphs is impossible for all digraphs and they believed that deciding the exact value of $\w(D)$ is NP-hard.
In \cite{CMR},
the authors showed a necessary and sufficient condition for digraphs to be weak-odd 2-edge colorable, and thus
$\w(D)$ can be determined in polynomial time. When limit to 2 colors, use ${\rm def}(D)$ to denote the {\it defect} of $D$, the minimum number of vertices in $D$ at which the condition ($\overrightarrow{\rm{WO}}$) is not satisfied. Hern\'{a}ndez-Cruz,  Petruevski, and  Krekovski \cite{CMR} proved that ${\rm def}(D)$ is related to the matching number of some graphs.

A {\it tournament} is an oriented graph where every pair of vertices are adjacent.
A digraph is called {\it semicomplete} if it is obtained from a complete graph by replacing each edge $(u,v)$ with the arc $uv$ or $vu$ or a pair of symmetric arcs. By {\it extended tournaments} we mean the digraph obtained from a tournament by blowing up some of its vertices into independent sets. Hern\'{a}ndez-Cruz,  Petruevski, and  Krekovski \cite{CMR} made a descriptive characterization for tournaments of the weak-odd chromatic index as follows.
\begin{thm}[\cite{CMR}]\label{wt=3}
For any tournament $T$, it holds that
$$\w (T)=\left\{
\begin{array}{rcl}
0 & &\text{if $T=K_1$,}\\
1& &\text{if $T$ is nontrivial and  every  vertex semi-degree is odd or zero},\\
3& &\text{if $T$ is nontrival, of odd order, and has just one peripheral vertex},\\
2& &\text{otherwise}.\\
\end{array}
\right.
$$
\end{thm}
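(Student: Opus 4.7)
The plan is to handle the four cases in turn. Cases 1 and 2 are immediate: $T = K_1$ has no arcs, and for a nontrivial $T$ with every semi-degree odd or zero, the all-one coloring satisfies $(\overrightarrow{\rm{WO}})$ at every vertex since every nonempty semi-cut has odd size, yielding $\w(T) = 1$. The substantive work lies in cases 3 and 4, both of which I would approach by translating the weak-odd $2$-edge coloring condition into a linear system over $\mathbb{F}_2$.

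For case 3, $\w(T) \leq 3$ is the general digraph bound from \cite{MR}, and I would prove the matching lower bound by a parity obstruction. Encoding $x_e = \varphi(e) \in \mathbb{F}_2$, condition $(\overrightarrow{\rm{WO}})$ with two colors reads as follows (using that $n$ odd forces $d^+(v) + d^-(v) = n-1$ even, so the two semi-degrees of each $v$ share a parity): a non-peripheral $v$ with both semi-degrees even imposes $\sum_{e \in \partial^+(v)} x_e = 1$ and $\sum_{e \in \partial^-(v)} x_e = 1$; a non-peripheral $v$ with both odd imposes $\sum_{e \in \partial^+(v) \cup \partial^-(v)} x_e = 0$ (the ``odd color'' must match on the two semi-cuts); and the unique peripheral $v^*$, whose nonempty semi-cut has the even positive size $n-1$, imposes $\sum_{e \in \partial^{\pm}(v^*)} x_e = 1$. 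Summing all equations in $\mathbb{F}_2$, each arc $e = uv$ has total coefficient $2 \equiv 0$ (it appears exactly once in the equations at $u$ and once in those at $v$), so the left-hand side vanishes; the right-hand side equals $1$, since non-peripherals contribute $0$ and $v^*$ contributes $1$. The contradiction $0 = 1$ forces $\w(T) \geq 3$.

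For case 4 the lower bound $\w(T) \geq 2$ is automatic since some vertex has an even positive semi-cut. For the upper bound I would show the analogous linear system is consistent by analyzing its left kernel. Introducing $\alpha_v, \beta_v \in \mathbb{F}_2$ for the kernel coordinates attached to $v$'s out- and in-cut equations (identified as $\alpha_v = \beta_v$ when $v$ yields a single joint equation), the kernel condition becomes the clean relation $\alpha_u = \beta_v$ on every arc $uv$. When $n$ is even, each non-peripheral vertex yields one equation on its unique even semi-cut, and a short argument using the arc in $T$ between any two candidate vertices with nonzero kernel entry forces the kernel to be trivial, so the system is solvable. When $n$ is odd the exclusion of case 3 gives $|P| \in \{0, 2\}$ (a tournament has at most one source and one sink), the sum-of-equations parity obstruction of case 3 now cancels to $0 = 0$, and a similar tournament-completeness argument propagates the relation $\alpha_u = \beta_v$ to pin down the kernel vectors as ``constants'' whose dot product with the right-hand side is $0$.

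The main obstacle I anticipate is the $n$ odd, $|P| = 2$ subcase of case 4, where the lone source $s$ and sink $t$ each impose only a one-sided constraint ($\alpha_s$ or $\beta_t$ alone) and the symmetry between $\alpha$ and $\beta$ is broken; showing that every left-kernel vector nevertheless satisfies $y \cdot b = 0$ will require a careful argument using the completeness of $T$ and, probably, R\'edei's theorem on Hamiltonian paths to propagate values uniformly across $V(T)$.
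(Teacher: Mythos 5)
The paper does not prove this statement; it is quoted from \cite{CMR} (and the paper's own Theorems 4.4--4.6 redo the analogous work only for extended tournaments). So your proposal has to stand on its own, and most of it does: cases 1 and 2 are right; your case-3 lower bound (sum all $\mathbb{F}_2$-constraints, each arc appears with coefficient $2\equiv 0$ on the left while the lone peripheral vertex contributes $1$ on the right) is a correct and complete parity obstruction; the lower bound $\w(T)\ge 2$ in case 4 is immediate; and your $n$ even kernel-triviality argument works, because the even positive semi-degree forcing $\alpha_u=1$ has size at least $2$, and the arc between two out-neighbours of $u$ yields $0=1$. It is also worth noting that your left-kernel formalism is exactly equivalent to the paper's Theorem~\ref{dkv23}: a left-kernel vector is precisely an assignment constant on each component of $PS(T)$ (and forced to $0$ on components meeting $V_3$), and the condition $y\cdot b=0$ is precisely ``$|V(K)\cap V_2|$ even or $V(K)\cap V_3\neq\emptyset$ for every nontrivial component $K$.''

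The genuine gap is the $n$ odd upper bound in case 4. You assert that the relation $\alpha_u=\beta_v$ on arcs ``pins down the kernel vectors as constants,'' but that is only true if $PS(T)$ is connected on its equation-bearing vertices, which you do not establish and which is not automatic: $PS(T)$ can split into several nontrivial components (this is exactly the phenomenon behind the ``bad'' configurations in the paper's Section~4, e.g.\ a component of the form $I_j^-\cup I_t^+$), and a kernel vector may be supported on a single component. Consequently your ``sum of all equations gives $0=0$'' check only handles the all-ones kernel vector; for the theorem you must show that \emph{every} nontrivial component $K$ of $PS(T)$ with $V(K)\cap V_3=\emptyset$ has $|V(K)\cap V_2|$ even, i.e.\ you must analyze how the (even total number of) $V_2$-vertices distribute among components, separating off the isolated vertices $s^-$, $t^+$ when $|P|=2$. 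That component analysis -- locating potential second components via sources/sinks of the subtournaments induced on $N^-(v)$ and $N^+(v)$, as in Claims 1 and 2 of the paper's Theorem~4.4 -- is where the real combinatorial content of the $\w=2$ case lives, and your proposal explicitly defers it. Until that is supplied, the case-4 upper bound for odd $n$ is unproven.
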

And the defect of tournament is 1 when the case $\w(T)=3$.


Then they asked whether these results can be extended to generalization classes of tournaments.

\begin{prob}[\cite{CMR}]
Characterize the families of semicomplete digraphs, extended tournaments and multipartite tournaments in terms of their weak-odd chromatic index.
\end{prob}

\begin{prob}[\cite{CMR}]\label{woq}
Characterize the defect in terms of  the families of semicomplete digraphs, extended tournaments and multipartite tournaments  when their defect are bounded.
\end{prob}

We give the complete characterization about the above two problems for the first two graph classes, i.e., semicomplete digraphs and extended tournaments. The results can be helpful for the remaining class. And we think the result of multipartite tournaments is also optimistic.

%
%
%

Hern\'{a}ndez-Cruz,  Petruevski, and  Krekovski \cite{CMR} also started the study of weak-odd edge covering and showed the weak-odd 2-edge covering conditions for graphs. Then they asked the situation about digraphs.
For a digraph $D$, an {\it edge covering} with color set $S$ is a mapping that assigns to each arc of $D$ a nonempty subset of $S$. The {\it weak-odd edge covering} is defined as edge covering such that condition ($\overrightarrow{{\rm WO}}$) is satisfied.
%
\begin{ques}[\cite{CMR}]\label{cover}
	Does every digraph admit a weak-odd 2-edge covering?
\end{ques}

We give a positive answer about tournaments. This is of positive significance to the study of digraphs. We believe that similar research can be carried out on the simple generalization classes of tournaments.

\vspace{2mm}
The paper is organized as follows. In next section, we first introduce the notion and terminology that are not mentioned before, then we list some auxiliary tools
that will be used in our proof. Then we give descriptive characterizations of semicomplete digraphs and extended tournaments respectively in Sections 3 and 4.
In the last section, we prove that every tournament admits a weak-odd 2-edge covering.

\section{Preliminary}
Give a digraph $D=(V(D),A(D))$,  the {\it degree} of $v\in V(D)$ , denoted by $d_D(v)$, is the total number of arcs that incoming at $v$ and outgoing at $v$, thus $d_D(v)=d^+_D(v)+d^-_D(v)$. By saying a graph or a digraph is {\it even} we shall mean that each vertex in it has even degree. The {\it minimum out-degree} ({\it minimum in-degree}) of $D$ is $\delta^+(D)=\min\{d^+_D(v)|v\in V(D)\}$ ($\delta^-(D)=\min\{d^-_D(v)|v\in V(D)\}$). The {\it minimum semi-degree} of $D$ is $\delta^0(D)=\min \{\delta^-(D),\delta^+(D)\}$.
For $X,Y\subseteq V(D)$, let $A(X,Y)=\{uv\in A(D)|u\in X,v\in Y\}$. A directed $X$-$Y$ path is an $(x,y)$-dipath $P$ such that $V(P)\cap X=\{x\}$ and $V(P)\cap Y=\{y\}$.
The subdigraph of $D$ induced by $X\subseteq A(D)$ will be denoted by $G[X]$. A vertex $u$ is said to {\it dominate} a vertex $v$ if $v\in N^+_D(u)$.

A {\it strong component} of a digraph $D$ is a maximal induced subdigraph
of $D$ which is strong.  If $D_1,\ldots,D_t$ are the strong components of $D$, then
$V(D_i)\cap V(D_j)=\emptyset$ for every $i\neq j$
as otherwise all the vertices $V(D_i)\cup V(D_j)$ are reachable from each other. The {\it strong component digraph} $SC(D)$ of $D$ is obtained by contracting
the strong components of $D$ and deleting any parallel arcs obtained in this
process.
The strong components
of $D$ corresponding to the vertices of $SC(D)$ of in-degree (out-degree) zero
are the {\it initial (terminal)} strong components of $D$, which is also called the {\it peripheral strong component}.

We shall emphasize that when dealing with graphs, the conception {\it $S$-join} is a powerful tool. Given a graph $G=(V(G),E(G))$ and an even-sized vertex subset $S$, we call a spanning subgraph $H$ is an {\it $S$-join} of $G$ if $d_H(v)$ is odd for $v \in S$ while $d_H(v)$ is even for $v \in V(G)\setminus S$. And it has been proved that  {\it if $G$ is a connected graph, then $G$ contains an $S$-join for any even-sized vertex subset $S$} (see \cite{S2003}).
When turn attention to digraphs, the problem of  determining the weak-odd chromatic index of digraphs can be settled through constructing the following auxiliary graphs.
Given a digraph $D=(V,A)$, its {\it bipartite representation} or {\it split} is a bipartite graph $BG(D)=(V^+,V^-, E)$ where $V^+=\{v^+:~v\in V\}$, $V^-=\{v^-:~v\in V\}$, and $(u^+,v^-)\in E$ if and only if $uv\in A$. The {\it partial split}, $PS(D)$, of $D$ is a graph obtained from $BG(D)$ by re-identifying each pair $(u^+; u^-)$
for which both $d^+_D(u)$ and $d^-_D(u)$ are odd. See Figure~\ref{wod1}.
\begin{figure}[htbp]
  \centering
  \includegraphics[width=14cm]{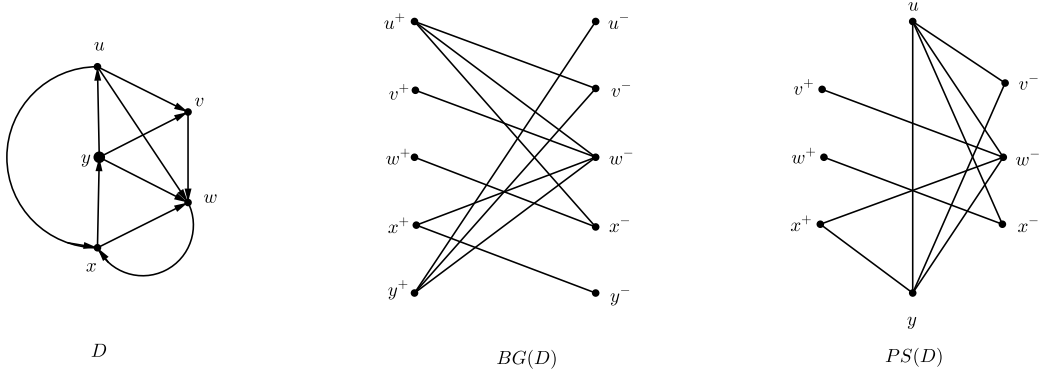}\\
  \caption{The split graph $BG(D)$ and partial split graph $PS(D)$ of $D$}\label{wod1}
\end{figure}

To solve the problem that whether a digraph is weak-odd 2-edge colorable, Hern\'{a}ndez-Cruz,  Petruevski, and  Krekovski \cite{CMR} defined a 3-partition $\{V_1; V_2;V_3\}$ of $V (PS(D))$:
\begin{itemize}
\item[$\bullet$]$V_1= V (D)\cap V (PS(D))$, i.e., $V_1$ is consisted of the vertices $u$ of $D$ with both $d^+_D(u)$ and $d^-_D(u)$ are odd.
\item[$\bullet$]$V_2=\{v\in V(PS(D))\setminus V_1: d_{PS(D)}(v) \text{ is even}\}$.
\item[$\bullet$] $V_3=\{v\in V(PS(D))\setminus V_1: d_{PS(D)}(v) \text{ is odd}\}$.

\end{itemize}
 We say a component $K$ of $PS(D)$ is `bad' if $V(K)\cap V_2$ is of odd size and $V(K)\cap V_3=\emptyset$.
Define a graph $G_D=(V_D,E_D)$, the vertex set consists of vertices $v_K$ corresponding to bad components $K$  and  two distinct vertices $v_{K'}$ and $v_{K''}$ are adjacent if
the respective bad components $K'$ and $K''$ contain the `halves' $v^+$
and $v^-$ of some vertex  $v\in V(D)$. Let $\alpha'_D$ be the cardinality of the maximum matching of $G_D$.
The following results proved in \cite{CMR} will be used later.
\begin{thm}[\cite{CMR}]\label{dkv23}
A digraph $D$ is weak-odd 2-edge colorable if and only if for every nontrivial component $K$ of $PS(D)$ we have that $V(K) \cap V_2$ is even-sized or $V(K)\cap V_3\neq \emptyset$.
\end{thm}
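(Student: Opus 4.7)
The strategy is to reformulate the weak-odd $2$-edge colorability of $D$ as a parity-constrained edge-selection problem on $PS(D)$, and then invoke the classical $T$-join theorem on each component.

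First I translate condition $(\overrightarrow{\rm WO})$ into $PS(D)$-language. A $2$-edge coloring is determined by its color-$1$ class $F\subseteq A(D)$. On a nonempty semi-cut $\partial^+_D(v)$, $(\overrightarrow{\rm WO})$ fails exactly when both color counts there are even; since they sum to $d^+_D(v)$, this is automatic when $d^+_D(v)$ is odd and is equivalent to $|F\cap\partial^+_D(v)|$ being odd when $d^+_D(v)$ is positive and even; the analogous statement holds for $\partial^-_D(v)$. Identifying $A(D)$ with $E(PS(D))$ in the natural way (arc $uv$ corresponds to the edge joining the representatives of $u^+$ and $v^-$), and noting that $d_{PS(D)}(u^\pm)=d^\pm_D(u)$ for $u\notin V_1$ while $(\overrightarrow{\rm WO})$ is automatic at every $u\in V_1$ (both semi-degrees being odd), condition $(\overrightarrow{\rm WO})$ translates to
\[
d_F(w)\text{ is odd for every }w\in V_2,
\]
with no constraint at vertices of $V_1$ or $V_3$ (where the relevant semi-degree is odd).

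For sufficiency, assume the hypothesis and build $F$ one component at a time. For each nontrivial component $K$ of $PS(D)$, define
\[
T_K=\begin{cases}V(K)\cap V_2, & \text{if }|V(K)\cap V_2|\text{ is even},\\ (V(K)\cap V_2)\cup\{w\}, & \text{otherwise, picking any }w\in V(K)\cap V_3.\end{cases}
\]
The hypothesis guarantees such a $w$ exists when needed, and $|T_K|$ is even in both cases. Since $K$ is connected, the classical $T$-join theorem yields $F_K\subseteq E(K)$ whose odd-$d_{F_K}$-degree vertex set is exactly $T_K$. Setting $F:=\bigsqcup_K F_K$ then satisfies the required parity on $V_2$, giving a weak-odd $2$-coloring.

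For necessity, assume $D$ admits a valid coloring with color-$1$ set $F$ and fix a nontrivial component $K$ with $V(K)\cap V_3=\emptyset$; the goal is to show $|V(K)\cap V_2|$ is even. Validity forces every vertex of $V(K)\cap V_2$ into the odd-$d_F$-degree set $T^*$ of $K$, and the handshake lemma applied to $(V(K),F\cap E(K))$ gives $|T^*|$ even. The remaining vertices of $V(K)$ lie in $V_1$, so I would close the argument by matching the parity of $|T^*\cap V_1|$ against $|V(K)\cap V_2|$ via careful bookkeeping of how the edges of $K$ distribute across the $V^+$- and $V^-$-sides of $BG(D)$ before identification. This last step is the main obstacle I anticipate: although $(\overrightarrow{\rm WO})$ places no local constraint at $V_1$-vertices, the global structure of $PS(D)$ together with $V(K)\cap V_3=\emptyset$ must conspire to force $|T^*\cap V_1|$ to be even, and isolating this parity cleanly is where the argument will require the most care.
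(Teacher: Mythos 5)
This is a theorem quoted from \cite{CMR}; the paper contains no proof of it to compare against, so I judge your proposal on its own terms. The decisive problem is your translation of $(\overrightarrow{\rm WO})$, and the obstacle you flag in the necessity direction is a symptom of that mistranslation rather than a genuine difficulty. The condition $(\overrightarrow{\rm WO})$ asks for a \emph{single} color $i$ whose number of occurrences is odd on \emph{each} nonempty semi-cut of $v$ simultaneously (this is how the paper uses it throughout: ``color $i$ satisfies the condition $(\overrightarrow{\rm WO})$ at $u$'' in {\bf (C)}, and ``both $d^+_{T'_2}(x)$ and $d^-_{T'_2}(x)$ are odd'' in Section 5), not that each semi-cut separately contains some color of odd multiplicity. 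At a vertex $u\in V_1$ the condition is therefore \emph{not} automatic: since both semi-degrees are odd, exactly one of the two colors has odd multiplicity on $\partial^+_D(u)$ and exactly one on $\partial^-_D(u)$, and these must be the \emph{same} color, which is equivalent to $|F\cap\partial^+_D(u)|+|F\cap\partial^-_D(u)|=d_F(u)$ being even. (This is precisely why the two halves of such a vertex are re-identified in $PS(D)$.) Under your weaker reading the theorem is in fact false: take $V(D)=\{a,c_1,c_2,c_3,c_4\}$ with arcs $ac_1$, $ac_2$, $c_1c_3$, $c_2c_4$, $c_3c_2$, $c_4c_2$; the unique nontrivial component $K$ of $PS(D)$ has $V(K)\cap V_2=\{a^+\}$ odd-sized and $V(K)\cap V_3=\emptyset$, yet coloring $ac_1$ with $1$, $ac_2$ with $2$ and all other arcs with $1$ makes every semi-cut separately contain a color of odd multiplicity --- while the true condition fails at $c_2$ (color $1$ is odd on $\partial^+(c_2)$ but even on $\partial^-(c_2)$, and color $2$ is even on $\partial^+(c_2)$).

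With the corrected translation everything falls into place: a $2$-coloring with color-$1$ class $F$ is weak-odd if and only if $d_F(w)$ is odd for every non-isolated $w\in V_2$, $d_F(u)$ is even for every $u\in V_1$, and $V_3$ is unconstrained. Your sufficiency construction then survives verbatim --- your $T_K$ never contains a $V_1$-vertex, so the $T_K$-join automatically has even degree there --- only its justification needs repair. And the necessity direction, which you describe as requiring ``careful bookkeeping'' to force $|T^*\cap V_1|$ to be even, collapses to one line: the constraint at $V_1$ forces $T^*\cap V_1=\emptyset$ outright, so if $V(K)\cap V_3=\emptyset$ then $T^*$ is exactly $V(K)\cap V_2$, which is even-sized by the handshake lemma. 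In short, the $T$-join-per-component architecture is the right one, but the proof as written is based on a false reformulation, and the gap you anticipated cannot be closed under your reading because the statement you would then be proving is not true.
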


\begin{prop}[\cite{CMR}]\label{wd=3}
If an even digraph $D$ has an odd number of peripheral vertices, then
$\w(D)=3$.
\end{prop}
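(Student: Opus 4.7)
My plan is to combine the known upper bound $\w(D)\le 3$ with a proof that $D$ is not weak-odd $2$-edge colorable, via Theorem~\ref{dkv23}. By that theorem I need to exhibit a nontrivial component $K$ of $PS(D)$ such that $|V(K)\cap V_2|$ is odd and $V(K)\cap V_3=\emptyset$.

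The key structural observation, which makes the condition $V(K)\cap V_3=\emptyset$ automatic, is that when $D$ is even we have $d^+_D(v)\equiv d^-_D(v)\pmod 2$ for every $v$. Thus $v\in V_1$ exactly when both $d^+_D(v)$ and $d^-_D(v)$ are odd; otherwise both are even, so $v^+$ and $v^-$ both have even degree in $PS(D)$ and therefore lie in $V_2$. Hence $V_3=\emptyset$ in its entirety, and it suffices to find a nontrivial component of $PS(D)$ with odd $V_2$-count.

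I would then run a global parity count. Since isolated vertices of $D$ affect neither $\w(D)$ nor the nontrivial components of $PS(D)$, I may assume $D$ has no isolated vertex, so the $p$ peripheral vertices of $D$ are precisely its non-isolated sinks and sources. Each $v\notin V_1$ contributes two vertices $v^+,v^-$ to $V_2$, so $|V_2|=2(|V(D)|-|V_1|)$ is even. A vertex of $PS(D)$ is isolated exactly when it is $v^+$ for a sink or $v^-$ for a source, since every identified $V_1$-vertex inherits the positive degree $d^+_D(v)+d^-_D(v)$. Each such isolated vertex is a trivial component of $V_2$-count $1$, and there are exactly $p$ of them. Summing over nontrivial components,
\[
\sum_{K\text{ nontrivial}} |V(K)\cap V_2| \;=\; |V_2|-p \;\equiv\; p \;\equiv\; 1\pmod 2,
\]
so some nontrivial component $K$ has $|V(K)\cap V_2|$ odd. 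By Theorem~\ref{dkv23} this blocks weak-odd $2$-edge colorability, giving $\w(D)\ge 3$; combined with $\w(D)\le 3$ we get $\w(D)=3$. The only real hazard is the bookkeeping around isolated $D$-vertices, which must be cleared so that the trivial components of $PS(D)$ correspond exactly to the peripheral vertices of $D$.
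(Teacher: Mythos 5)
The paper does not actually prove Proposition~\ref{wd=3}; it is imported from \cite{CMR} as a black box, so there is no in-paper argument to compare against. Your route --- reduce to non-2-colorability via Theorem~\ref{dkv23}, observe that evenness of $D$ forces $V_3=\emptyset$ (each $v\notin V_1$ has both semi-degrees even, so $v^+,v^-\in V_2$), and then do a parity count showing that the trivial components of $PS(D)$ soak up an odd number of $V_2$-vertices, leaving some nontrivial component with odd $V_2$-count --- is correct and is exactly the kind of argument the $PS(D)$ machinery is designed for. The one step that deserves more care is the one you yourself flag: ``I may assume $D$ has no isolated vertex'' is justified by saying isolated vertices change neither $\w(D)$ nor the nontrivial components of $PS(D)$, but that only shows the \emph{conclusion} is preserved under deleting them, not the \emph{hypothesis}. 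An isolated vertex is peripheral under the paper's definition ($d^+=0$), so deleting it flips the parity of the peripheral count; indeed, an even digraph with $\w=2$ and no sources or sinks (e.g.\ four vertices joined by digons in a $4$-cycle pattern) together with one isolated vertex has exactly one peripheral vertex and $\w=2$, so the proposition is literally false unless isolated vertices are excluded or counted as both a source and a sink (which is what your count of isolated vertices of $PS(D)$ really measures: sinks plus sources, with isolated vertices counted twice). Under that standard reading your proof is complete; you should just state the convention explicitly rather than dispose of it with a ``without loss of generality.''
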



\begin{thm}[\cite{CMR}]\label{DG-A}
For every digraph $D$, ${\rm def}(D)=n(G_D)-\alpha'_D$ holds.
\end{thm}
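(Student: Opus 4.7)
The plan is to prove both inequalities ${\rm def}(D)\le n(G_D)-\alpha'_D$ and ${\rm def}(D)\ge n(G_D)-\alpha'_D$ in turn, after which the equality is immediate.

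For the upper bound, I would take a maximum matching $M$ of $G_D$ with $|M|=\alpha'_D$ and construct a $2$-edge coloring realising defect $n(G_D)-\alpha'_D$. For each edge $v_{K'}v_{K''}\in M$, the definition of $G_D$ supplies a vertex $u\in V(D)$ with $u^+\in V(K')$ and $u^-\in V(K'')$; I would mark $u^+$ and $u^-$ as \emph{sacrificial} halves in their respective bad components. For each $v_K\in V(G_D)$ left uncovered by $M$, I would mark an arbitrary $w\in V_2\cap V(K)$ as sacrificial. For every component $K$ of $PS(D)$, set $S_K=(V_2\cap V(K))\setminus\{\text{sacrificial halves in }K\}$ and, if needed to make $|S_K|$ even, append one vertex from $(V_1\cup V_3)\cap V(K)$. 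Using the standard fact that any connected graph has an $S$-join for each even-sized $S$, I extract an $S_K$-join $H_K\subseteq K$, and the union $H=\bigcup_K H_K$ corresponds to a $2$-coloring of $A(D)$ through $PS(D)$. Every non-sacrificial half in $V_2$ has odd $H$-degree, so the only vertices of $D$ where ($\overrightarrow{\rm{WO}}$) can fail are those with a sacrificial half; matching-edge pairs contribute $\alpha'_D$ such vertices (one shared per pair), and unmatched vertices of $G_D$ contribute $n(G_D)-2\alpha'_D$ further ones, giving exactly $n(G_D)-\alpha'_D$ defect vertices.

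For the lower bound, I would start from an arbitrary weak-odd $2$-edge coloring with color-$1$ subgraph $H\subseteq PS(D)$, and set $F=\{w\in V_2:d_H(w)\text{ is even}\}$ to be the set of failing halves. By Theorem~\ref{dkv23}, for every bad component $K$ we must have $F\cap V(K)\ne\emptyset$; choose a representative $f(K)\in F\cap V(K)$. A pair of representatives $f(K),f(K')$ can collapse onto a single vertex $v\in V(D)$ only when $\{f(K),f(K')\}=\{v^+,v^-\}$, which by the edge rule of $G_D$ corresponds to $v_Kv_{K'}\in E(G_D)$. The collection of such pairs forms a matching $M^*\subseteq E(G_D)$, and hence $|M^*|\le \alpha'_D$, so the number of distinct $v\in V(D)$ touched by $\{f(K)\}_K$ is $n(G_D)-|M^*|\ge n(G_D)-\alpha'_D$. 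Since each such $v$ is necessarily a defect vertex, ${\rm def}(D)\ge n(G_D)-\alpha'_D$.

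The main obstacle I expect is the parity bookkeeping in the upper-bound construction: after declaring the sacrificial halves, I must verify that $|S_K|$ can always be rendered even with at most one extra vertex taken from $(V_1\cup V_3)\cap V(K)$. For non-bad components, Theorem~\ref{dkv23} guarantees a $V_3$-vertex in $V(K)$ exactly in the case $|V_2\cap V(K)|$ is odd; for bad components, the single sacrifice already shifts the odd quantity $|V_2\cap V(K)|$ down by one, leaving an even set with no further adjustment required. This case analysis is the only technical hurdle, and once it is settled the two bounds combine to give the equality.
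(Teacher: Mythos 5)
This statement is imported from \cite{CMR} and the paper gives no proof of it, so I am comparing your argument against the standard one behind the citation. Your overall strategy is the right one: translate $2$-edge-colorings of $D$ into subgraphs $H$ of $PS(D)$, realize the upper bound by $S$-joins in the components after designating one sacrificial half per bad component (shared across a maximum matching of $G_D$), and obtain the lower bound from a parity obstruction in each bad component together with the observation that a single vertex of $D$ can serve at most two bad components, the shareable pairs forming a matching of $G_D$.

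The genuine gap is your treatment of $V_1$. The dictionary between colorings and subgraphs $H\subseteq PS(D)$ requires odd $d_H$ on the non-isolated part of $V_2$ but \emph{even} $d_H$ on $V_1$: a re-identified vertex $u$ satisfies ($\overrightarrow{\rm{WO}}$) iff the color-$1$ counts on its two semi-cuts have equal parity, i.e., iff $d_H(u)$ is even; only $V_3$ is unconstrained. This bites in both halves of your proof. In the upper bound, ``append one vertex from $(V_1\cup V_3)\cap V(K)$'' is wrong: putting a $V_1$ vertex into $S_K$ forces odd $H$-degree there and creates a defect vertex you have not budgeted for. You must append a $V_3$ vertex, which exists for every non-bad component with $|V_2\cap V(K)|$ odd by the very definition of badness (not by Theorem~\ref{dkv23}, which you cite for this). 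In the lower bound, your failure set $F=\{w\in V_2:\ d_H(w)\ \text{even}\}$ need not meet every bad component: the parity obstruction in a bad $K$ only shows that either some $V_2$-vertex of $K$ has even $H$-degree or some $V_1$-vertex of $K$ has odd $H$-degree, and a coloring may realize the second option, leaving your representative $f(K)$ undefined. The repair is to enlarge $F$ by $\{v\in V_1:\ d_H(v)\ \text{odd}\}$; since a $V_1$ vertex lies in exactly one component it cannot be shared by two bad components, so the collapsing pairs still form a matching of $G_D$ and the count $n(G_D)-\alpha'_D$ survives. Two smaller slips: the lower bound must start from an arbitrary $2$-edge coloring attaining the defect (a ``weak-odd'' one need not exist when ${\rm def}(D)>0$), and you should say explicitly that each representative is a genuine defect vertex of $D$, which holds because every vertex of a nontrivial component of $PS(D)$ has positive degree.
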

%

Finally, we give a useful statement  about the weak-odd edge coloring. Let $D=(V(D),A(D))$ be a digraph and $v\in V(D)$. Let $D'$ be the digraph obtained from $D$ by deleting $v$. If $D'$ admits a weak-odd 2-edge coloring $\phi$, then we define a 2-edge coloring $\varphi$ of $D$ such that ($\overrightarrow{{\rm WO}}$) is satisfied for each vertex apart from $v$ as follows.

\begin{description}
	\item[(C)] For each $u\in N_{D}(v)$, suppose that color $i$ satisfies the condition ($\overrightarrow{{\rm WO}}$) at $u$ for $\phi$, where $i\in[2]$. If $uv\in A(D)$,  then coloring $uv$ with color $3-i$ when $d^+_{D'}(u)>0$ and color $i$ when $d^+_{D'}(u)=0$. If $vu\in A(D)$, then coloring  $vu$ with color $3-i$ when $d^-_{D'}(u)>0$ and color $i$ when $d^-_{D'}(u)=0$.
	
\end{description}

\section{Semicomplete digraphs}
We first state some simple properties of semicomplete digraphs, which can be find in Section
2 of \cite{BG}:
(i) every semicomplete digraph has a hamiltonian dipath; (ii) every nontrivial strong semicomplete digraph contains a hamiltonian dicycle; (iii) the strong component digraph of an semicomplete digraph is an acyclic tournament and has an acyclic ordering of vertices; (iv) every semicomplete digraph has only one initial (terminal) strong component.

For simplicity of presentation, we call every nontrivial even semicomplete digraph having only one peripheral vertex bad and others good in the following.
\begin{thm}\label{wsd3}
For any semicomplete digraph $D$, it holds that
$$\w (D)=\left\{
\begin{array}{rcl}
0 & &\text{if $D=K_1$},\\
1& &\text{if $D$ is nontrivial and every vertex semi-degree is odd or zero,}\\
3& &\text{if $D$ is a nontrival even digraph with just one peripheral vertex},\\
2& &\text{otherwise}.\\
\end{array}
\right.
$$
\end{thm}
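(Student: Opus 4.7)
The plan is to handle the four cases of the theorem in turn; the first three follow immediately from the preliminary machinery, and the fourth (the ``otherwise'' case) carries the structural content specific to semicomplete digraphs. If $D = K_1$ then trivially $\w(D) = 0$. If $D$ is nontrivial with every semi-degree odd or zero, coloring every arc with color~$1$ gives a weak-odd $1$-edge coloring (each non-empty semi-cut contains an odd number of arcs of the unique color), so $\w(D) = 1$. If $D$ is nontrivial, even, and has exactly one peripheral vertex, Proposition~\ref{wd=3} applied with peripheral count $1$ (odd) gives $\w(D) = 3$ directly.

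For the ``otherwise'' case I claim $\w(D) = 2$. The lower bound $\w(D) \ge 2$ is immediate: $D$ is nontrivial and therefore has an arc (forbidding $\w = 0$), and some vertex has an even positive semi-degree (otherwise we would be in case~2), so a single-color coloring presents only even many arcs of that color at that semi-cut, violating $(\overrightarrow{\rm WO})$. For the upper bound $\w(D) \le 2$, I apply Theorem~\ref{dkv23} and must verify that no nontrivial component $K$ of $PS(D)$ is \emph{bad}, i.e., has $|V(K) \cap V_2|$ odd together with $V(K) \cap V_3 = \emptyset$.

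This verification is carried out by case analysis on the number of peripheral vertices in $D$, which for a semicomplete digraph is at most two (peripheral vertices arising only as the unique vertex of a trivial initial or terminal strong component). Thus the ``otherwise'' case splits into three subcases: (i) $D$ is strong and has no peripheral vertex; (ii) $D$ has exactly one peripheral vertex and, since case~3 is excluded, $D$ is not even; (iii) $D$ has a source $s$ and a sink $t$, both in trivial strong components. In subcase~(iii) I peel off $s$ and $t$ to reduce to the semicomplete digraph $D - \{s, t\}$ and extend via procedure~(C), verifying $(\overrightarrow{\rm WO})$ at $s$ and $t$ directly since each has only one non-empty semi-cut. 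In subcase~(ii) the non-evenness produces vertices whose odd semi-degrees give rise to $V_3$-halves, which I would use to argue that every nontrivial component of $PS(D)$ either has $V_2$ even or meets $V_3$.

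The main obstacle is subcase~(i), the strong semicomplete case, where $D$ may contain many $2$-cycles and the resulting $V_1$-merging makes the component structure of $PS(D)$ intricate. Here I plan to use the Hamiltonian dicycle in $D$ (which produces a closed trail in $BG(D)$ that controls connectivity in $PS(D)$) combined with the parity identity $d^+(v) + d^-(v) = (n-1) + s(v)$, where $s(v)$ counts $2$-cycles at $v$, to rule out a bad component. Should a direct structural verification prove too awkward, the fallback is induction on $|V(D)|$: remove a judicious vertex so that $D - v$ is semicomplete and still outside case~3, apply the inductive hypothesis to obtain a weak-odd $2$-coloring of $D - v$, and extend to $D$ via~(C), verifying $(\overrightarrow{\rm WO})$ at $v$ by hand.
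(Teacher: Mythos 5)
Your handling of the first three cases and of the lower bounds is fine: $\w(D)\ge 2$ in the ``otherwise'' case because some vertex has a positive even semi-degree, and Proposition~\ref{wd=3} gives $\w(D)=3$ in the even, one-peripheral-vertex case. The genuine gap is that the heart of the theorem --- the upper bound $\w(D)\le 2$ for every semicomplete digraph outside cases 1--3 --- is never actually established. In your subcase (i) (strong $D$) you only announce a plan (``I plan to use the Hamiltonian dicycle\dots the fallback is induction''), and in subcase (ii) you say what you ``would use''; neither argument is carried out, and the presence of $2$-cycles really does make the component structure of $PS(D)$ delicate, so Theorem~\ref{dkv23} cannot be invoked without a concrete verification. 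The fallback induction is also unsound as stated: procedure (C) only guarantees condition ($\overrightarrow{\rm WO}$) at vertices other than the deleted vertex $v$ and gives no control over the color parities inside the semi-cuts of $v$ itself; the paper only uses (C) when $d_D(v)$ is odd, and in an even semicomplete digraph with zero or two peripheral vertices (which falls under ``otherwise'') no such $v$ exists.

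Subcase (iii) contains a concrete error as well. After deleting the source $s$ and the sink $t$, every remaining vertex loses exactly one in-arc (from $s$) and one out-arc (to $t$), so the parities of both of its semi-degrees flip, and $D-\{s,t\}$ may be a nontrivial even semicomplete digraph with exactly one peripheral vertex, i.e.\ a digraph with $\w=3$; for instance, append a dominating source and a dominated sink to a $5$-vertex tournament whose only peripheral vertex is a sink. Your reduction then collapses. The paper avoids all of this with a direct construction: color with $1$ the arcs of a Hamiltonian dicycle (strong case), of a Hamiltonian $(s,t)$-dipath (two trivial peripheral strong components), or of the union of Hamiltonian dicycles of the two nontrivial peripheral strong components together with a connecting path and a chord, coloring everything else $2$; the remaining case (exactly one trivial peripheral strong component) is handled by deleting a vertex of odd degree and applying (C), or by a further explicit spanning subdigraph when that vertex is not the sink. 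You would need to supply arguments at this level of detail, or a genuinely completed $PS(D)$-component analysis, for the proof to stand.
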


\begin{proof}
By Proposition~\ref{wd=3} and $\w (D)\le 3$,  it suffices to show that every good semi-complete digraph is weak-odd 2-edge colorable.
	

Give a good semicomplete digraph $D$. In the following, we always first  find a spanning subdigraph $\hat{D}$ of $D$. Then we define a 2-edge coloring $\theta$
of $D$ as  the arc set of $\hat{D}$ with color 1 and $A(D)-A(\hat{D})$ with color 2. It is easy to check ($\overrightarrow{{\rm WO}}$) holds for every vertex of $D$ under $\theta$ in each case.

If $D$ is strong, then let $\hat{D}$ be a Hamilton dicycle. If $D$ has two trivial peripheral strong components, say $x,y$, then let $\hat{D}$ be a $(x,y)$-Hamilton dipath.
If both peripheral strong components of $D$ are nontrivial, then there exists a directed $K_i$-$K_j$ path $P$ in $D$ that passes through every vertex $v\notin V(K_i)\cup V(K_j)$, where $K_i$ and $K_j$ are the initial and terminal strong components of $D$. Let $C_i$ and $C_j$, respectively, be hamiltonian
dicycles in $K_i$ and $K_j$. Denote by $x$ and $y$, respectively, the initial and terminal
vertex of $P$. We have $xy\notin A(P)$ if $P$  is of length $\ell(P)>1$.  Let $\hat{D}=D[A(C_i\cup C_j)]$ when $\ell(P)=1$ and
$\hat{D}=D[A(C_i\cup C_j\cup P)\cup \{xy\}]$ when $\ell(P)>1$.
Then color 1 meets condition ($\overrightarrow{\rm{WO}}$) for above cases.

We complete the proof by supposing that exactly one peripheral strong component of $D$, without loss of generality, the terminal one, is trivial, denoted by $\{y\}$. Then $y$ is the sink of $D$. Now there is a vertex $v\in V(D)$ such that $d_D(v)$ is odd as $D$ is good. Let $D'$ be the semicomplete digraph obtained from $D$ by deleting the vertex $v$ ($v=y$ if $d_D(y)$ is odd). We proceed by distinguishing whether $v=y$.

{\bf Case 1.} $v=y$.

First suppose that $D'$ does not contain peripheral strong components.
Then we have $\w(D')\le2$ by the above analysis. Let $\phi$ be a weak-odd 2-edge coloring of $D'$ and $\varphi$ be a 2-edge coloring defined as in {\bf (C)}.  Since $d_D(v)$ is odd,  color 1 or 2 satisfies the condition ($\overrightarrow{{\rm WO}}$) at $v$ under $\varphi$. Hence, $\varphi$ is a weak-odd 2-edge coloring of $D$.

Now we may assume that there exists a sink in $D'$, say $y'$.
Let $K$ be the initial strong component of $D$, and $C$ be a hamiltonian dicycle in $K$. Take a directed $K$-$y'$ path $P$ in $D'$ that pass through every vertex not in $V(K)$. Let $x$ be the initial vertex of $P$. Then we let $\hat{D}=D[A(C\cup P)\cup \{xv, y'v\}]$. Then $\theta$ is a weak-odd 2-edge coloring of $D$ because $d_D(v)$ is odd.

{\bf Case 2.} $v\neq y$.

Directly $y$ is still the sink of $D'$ and $d^+_D(v),d^-_D(v)>0$.
First suppose that $D'$ has another peripheral vertex, say $x$. Then $x$ is the source of $D'$.
Obviously, $vx$ and $vy$ are contained in $A(D)$. Let $P$ be a hamiltonian dipath in $D'$.
If $d^+_D(v)$ is odd, then there is a vertex $w\in V(P)$ such that $wv\in A(D)$ and $wy\notin A(P)$, and let $\hat{D}=D[A(P)\cup\{vx,wv,wy,vy\}]$. Otherwise, let $\hat{D}=D[A(P)\cup \{vx\}]$.

Now $D'$ has exactly one peripheral vertex $y$. Suppose that $V(D')=V(K)\cup \{y\}$ where $K$ is the initial strong component of $D'$.
Let $C$ be a hamiltonian dicycle in $K$.
If $d^+_D(v)$ is odd, then there is a vertex $w\in V(C)$ such that $wv,wy\in A(D)$. Let $\hat{D}=D[A(C)\cup\{wy,wv\}]$. Otherwise, let $\hat{D}=D[A(C)\cup \{vy\}]$.


Finally, we consider the case that $V(D')\neq V(K)\cup \{y\}$. Take a directed $K$-$y$ path $P$ in $D'$ that passes through every vertex not in $V(K)$. Let $x$ be the initial vertex of $P$. By our latest assumption, the arc $xy\notin A(P)$.
If $d^+_D(v)$ is even, then let $\hat{D}=D[A(C\cup P)\cup\{vy,xy\}]$.
Now assume that $d^+_D(v)$ is odd. If $xv\notin A(D)$, then there is a vertex $w\in V(D')\setminus \{x,y\}$ such that $wv\in A(D)$ and $wy\notin A(P)$. Let $\hat{D}=D[A(C\cup P)\cup\{wy,wv,xy\}]$.
Otherwise, let $\hat{D}=D[A(C\cup P)\cup \{xv\}$.


Then $\theta$ is a weak-odd 2-edge coloring of $D$ for Case 2. Indeed, color 2 fits the condition ($\overrightarrow{{\rm WO}}$) at $v$ while color 1 works for every other vertex.
\end{proof}

\begin{prop}\label{propsdd}
For any semicomplete digraph $D$, it holds that
$${\rm def}(D)=\left\{
\begin{array}{rcl}
1 & &\text{if $D$ is a nontrival even digraph with just one peripheral vertex},\\
0& &\text{otherwise}.\\
\end{array}
\right.
$$
\end{prop}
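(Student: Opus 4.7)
The plan is to split according to whether $D$ falls in the ``bad'' case of Theorem~\ref{wsd3}, i.e.\ whether $D$ is a nontrivial even semicomplete digraph with exactly one peripheral vertex.

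For the ``otherwise'' branch, Theorem~\ref{wsd3} gives $\w(D)\le 2$, so a weak-odd $2$-edge coloring of $D$ already exists and ${\rm def}(D)=0$. This is immediate.

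For the ``bad'' branch, Theorem~\ref{wsd3} gives $\w(D)=3$, so ${\rm def}(D)\ge 1$. The goal is to show ${\rm def}(D)\le 1$ by constructing a $2$-edge coloring of $D$ that satisfies $(\overrightarrow{{\rm WO}})$ at every vertex except possibly one. Let $y$ be the unique peripheral vertex; assume without loss of generality that $y$ is a sink, the source case being symmetric. Because $D$ is semicomplete with sink $y$, every other vertex has an arc to $y$, so in $D':=D-y$ we have $d_{D'}(u)=d_D(u)-1$ for every $u\ne y$. Since $D$ is even, every vertex of $D'$ then has odd total degree, hence at least one of $d^+_{D'}(u), d^-_{D'}(u)$ is odd. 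In particular $D'$ is not even, so $D'$ is not bad, and Theorem~\ref{wsd3} applied to $D'$ yields $\w(D')\le 2$.

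Now take any weak-odd $2$-edge coloring $\phi$ of $D'$ and extend it to a $2$-edge coloring of $D$ by rule (C) from the end of Section~2. By design, rule (C) preserves $(\overrightarrow{{\rm WO}})$ at every vertex $u\ne y$ while placing no constraint on $y$, so the only vertex that can fail under the extended coloring is $y$. Hence ${\rm def}(D)\le 1$, and combined with the lower bound this gives ${\rm def}(D)=1$. The only thing to double-check is that rule (C) applies cleanly for each $u\ne y$, including the case $d^+_{D'}(u)=0$ (i.e.\ $u$ becomes a new sink of $D'$); the parity computation $d^-_{D'}(u)=d_D(u)-1$ shows $u$'s in-semi-cut in $D'$ is nonempty of odd size, so the color guaranteed by $\phi$ at $u$ is a genuine witness that rule (C) can extend. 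No step is a real obstacle --- the argument turns on the observation that deleting the sole peripheral vertex of a bad $D$ flips all remaining total degrees to odd and thereby breaks the parity obstruction that forces $\w(D)=3$.
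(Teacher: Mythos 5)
Your proof is correct and follows essentially the same route as the paper: delete the unique peripheral vertex $y$, observe that $D-y$ is no longer even (hence not bad, so $\w(D-y)\le 2$ by Theorem~\ref{wsd3}), and extend a weak-odd $2$-edge coloring of $D-y$ via rule (C), which can only fail at $y$. Your write-up just makes explicit the parity computation that the paper dismisses as ``not hard to find.''
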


\begin{proof}
By Theorem~\ref{wsd3}, we may assume that $D$ is bad and has a sink $y$. Let $D'$ be the digraph obtained from $D$ by deleting the vertex $y$. It is not hard to find that $D'$ is not an even semicomplete digraph, thus $\w(D')\le2$. Apply to $A(D)$ the particular 2-edge coloring constructed as {\bf (C)}. The condition ($\overrightarrow{{\rm WO}}$) is satisfied at each vertex apart from $y$.
\end{proof}

\begin{prop}
Every bad semicomplete digraph $D$ admits a 2-edge coloring such that condition ($\overrightarrow{{\rm WO}}$) is satisfied at each vertex apart from a prescribed vertex $v\in V(D)$.
\end{prop}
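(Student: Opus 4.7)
Let $y$ denote the unique peripheral vertex of the bad semicomplete digraph $D$. Since reversing every arc of $D$ preserves both the ``bad'' hypothesis and the desired conclusion, I may assume without loss of generality that $y$ is a sink. The plan is to split on whether the prescribed vertex $v$ coincides with $y$, reducing each case to the deletion-and-extension mechanism already embodied in the proof of Proposition~\ref{propsdd} together with the construction~\textbf{(C)} from the preliminaries.

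If $v = y$, the conclusion is immediate: the coloring produced inside the proof of Proposition~\ref{propsdd} is obtained by deleting $y$, 2-edge coloring $D-y$, and extending via~\textbf{(C)}, which places its sole defect at $y$. So that same coloring already witnesses the statement for this choice of $v$.

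The case $v \neq y$ is the interesting one. Set $D' := D - v$. Because $D$ is semicomplete and $y$ is a sink, the only arc between $v$ and $y$ is $vy$, whence $d_{D'}(y) = d_D(y) - 1$ is odd. Thus $D'$ is a semicomplete digraph that fails to be even, and in particular is not bad. Theorem~\ref{wsd3} therefore gives $\w(D') \le 2$. Choosing any weak-odd $2$-edge coloring $\phi$ of $D'$ and extending it to a $2$-edge coloring $\varphi$ of $D$ via construction~\textbf{(C)} produces, by the very design of~\textbf{(C)}, a coloring that satisfies~($\overrightarrow{{\rm WO}}$) at every vertex of $D$ apart from the prescribed $v$, which is what is required.

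The principal obstacle is ruling out that $D'$ itself be bad, for otherwise Theorem~\ref{wsd3} would only give $\w(D')=3$ and~\textbf{(C)} would not apply; the parity computation at $y$ is precisely what dispatches this. A supplementary sanity check is that $D'$ is nontrivial, so that Theorem~\ref{wsd3} delivers $\w(D')\in\{1,2\}$ rather than $0$; this follows from the elementary fact that a bad semicomplete digraph has odd order at least $5$, via a short degree count at the sink (forcing $|V(D)|$ to be odd) together with direct inspection of the residual cases $|V(D)|\in\{1,3\}$.
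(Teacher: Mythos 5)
Your proof is correct, but for the main case $v\neq y$ it takes a genuinely different route from the paper. The paper works directly in the partial split graph: it notes that $PS(D)$ has a single nontrivial component $K$ with $V_3=\emptyset$ and $V(K)\cap V_2=V_2\setminus\{y^+\}$ odd-sized, chooses an even-sized target set $S$ depending on whether $v\in V_1$ or $v^+\in V_2$, takes an $S$-join $H$ in $K$, and lets $E(H)$ be color class $1$ --- an explicit construction in the spirit of the $S$-join tool from the preliminaries. You instead delete the prescribed vertex $v$ and reduce to Theorem~\ref{wsd3} plus construction \textbf{(C)}, which is the same mechanism the paper uses for Proposition~\ref{propsdd} (there applied only to the sink). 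Your key step checks out: since $y$ is a sink of a semicomplete digraph, the unique arc between $v$ and $y$ is $vy$, so $d_{D'}(y)=d_D(y)-1$ is odd, $D'$ fails to be even, hence is not in the $\w=3$ case of Theorem~\ref{wsd3}; and your side remarks (odd order, order at least $5$, so $D'$ is a nontrivial semicomplete digraph) are all verifiable by the degree count $d_D(y)=|V(D)|-1$ at the sink and inspection of order $3$. What each approach buys: yours is more uniform --- one deletion argument covers both Proposition~\ref{propsdd} and this statement, with no appeal to $S$-joins --- while the paper's $S$-join argument is self-contained at the level of $PS(D)$ and does not route through the full characterization theorem. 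Either is acceptable.
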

\begin{proof}
We may assume that $D$ has a sink $y$. If $v=y$, then by Proposition~\ref{propsdd}, we are done. Suppose that $v\neq y$.
Note that $PS(D)$ has only one nontrivial component $K$. Observe that $V(K)\cap V_2= V_2\setminus \{y^+\}$ is odd-sized, and $V_3=\emptyset$.
If $v\in V_1$, then let $S=\{v\}\cup (V_2\setminus\{y^+\})$. If $v^+\in V_2$, then let $S=V_2\setminus\{v^+,y^+\}$. Take an $S$-join $H$ in $K$, and then color $E(H)$ with color 1 and the rest edges of $K$ with color 2. The inherited 2-coloring of $D$ fits the condition.
\end{proof}

\section{Extended tournaments}
In this section, we characterize the weak-odd chromatic index of extended tournaments. Let $D=(V,A)$ be a digraph with $V=\{v_1,\ldots,v_n\}$.
Blow up $v_1,\ldots, v_n$ into independent sets $I_1,\ldots,I_n$ of size $s_1, \ldots, s_n$ respectively, where $|s_i|\ge1,i\in[n]$. We call the resulted digraph an extended digraph of $D$ and denote it by $ED$. Without loss of generality, suppose that $s_1,\ldots,s_\ell$ are odd and others are even where $\ell \le n$.
Denote by $v^1_i,\ldots,v^{s_i-1}_i$ the other $s_i-1$ copies of $v_i$ in $ED$ for $i\in [n]$. Let $I^+_i=\{v^+, v^{1+}_i,\ldots,v^{s_{i-1}+}_i\}$ and $I^-_i=\{v^-, v^{1-}_i,\ldots,v^{s_{i-1}-}_i\}$ for $i\in [n]$. Let $V'_1,V'_2,V'_3$ and $V_1,V_2,V_3$ be the vertex partitions of $PS(D)$ and $PS(ED)$ as defined in section 2, respectively.


%

\begin{thm}\label{sodd}
If $\ell=n$, then $\w(ED)=\w(D)$.
\end{thm}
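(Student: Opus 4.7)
The plan is to prove the two inequalities $\w(ED) \le \w(D)$ and $\w(ED) \ge \w(D)$ separately. For the upper bound, I would take any weak-odd $k$-edge coloring $\phi$ of $D$ and lift it to $ED$ by setting $\tilde\phi(v_i^a v_j^b) = \phi(v_i v_j)$ on every arc. Since every $s_j$ is odd, the number of color-$c$ arcs in $\partial^+_{ED}(v_i^a)$, namely $\sum_{v_j\in N^+_D(v_i),\,\phi(v_iv_j)=c} s_j$, has the same parity as the number of color-$c$ arcs in $\partial^+_D(v_i)$, and likewise for in-cuts. Hence any color witnessing $(\overrightarrow{{\rm WO}})$ at $v_i$ under $\phi$ also witnesses it at every copy $v_i^a$ under $\tilde\phi$.

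For the lower bound, the basic observation is $d^+_{ED}(v_i^a) = \sum_{v_j\in N^+_D(v_i)} s_j \equiv d^+_D(v_i) \pmod 2$, and analogously for in-degrees, so semi-degrees of corresponding vertices agree in parity and in vanishing. This immediately settles the cases $\w(D)\in\{0,1\}$ and the implication $\w(D)\ge 2\Rightarrow \w(ED)\ge 2$. The remaining implication $\w(D)=3\Rightarrow \w(ED)=3$ I would handle via Theorem~\ref{dkv23}, by establishing the cleaner equivalence that $PS(D)$ has a bad nontrivial component iff $PS(ED)$ does. The tool I plan to introduce is the natural projection $\pi : V(PS(ED))\to V(PS(D))$ mapping $v_i^{a\pm}$ to $v_i^\pm$ (and identified copies to identified originals); the parity preservation ensures that $v_i^a$ is identified in $PS(ED)$ exactly when $v_i$ is in $PS(D)$, so $\pi$ is well-defined, and it is clearly a graph homomorphism since arcs of $ED$ correspond to arcs of $D$.

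The key steps are then (i) $\pi$ induces a bijection $K \mapsto \tilde K := \pi^{-1}(V(K))$ between the nontrivial components of $PS(D)$ and those of $PS(ED)$, and (ii) this bijection preserves badness. Property (ii) is a direct parity count: since every $s_i$ is odd, $|V(\tilde K)\cap V_2| = \sum_{x\in V(K)\cap V_2'} s(x) \equiv |V(K)\cap V_2'| \pmod 2$ (where $s(x)$ is the size of the fiber over $x$), and $V(\tilde K)\cap V_3\neq\emptyset \Leftrightarrow V(K)\cap V_3'\neq\emptyset$. The main obstacle I foresee is the connectivity half of (i): showing that $\pi^{-1}(V(K))$ is a single component of $PS(ED)$, rather than merely a union of components. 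I would exploit the fact that whenever $v_iv_j\in A(D)$ the edges between $I_i^+$ and $I_j^-$ in $BG(ED)$ form a complete bipartite graph, so any two copies $v_i^{a+}, v_i^{b+}$ share every common neighbor $v_j^{c-}$ in $PS(ED)$ and thus lie in a single component as soon as $v_i^+$ has some neighbor in $K$ (automatic since $K$ is nontrivial); a short induction along a path in $K$ then propagates this to all of $\pi^{-1}(V(K))$. Combined with (ii), this closes out the last case and yields $\w(ED)=\w(D)$.
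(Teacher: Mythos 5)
Your proof is correct and follows essentially the same route as the paper's: both reduce the statement to a parity-preserving correspondence between the components of $PS(D)$ and $PS(ED)$ and then invoke Theorem~\ref{dkv23}. The paper simply asserts the component correspondence without argument, so your explicit verification that $\pi^{-1}(V(K))$ is a single component (via the complete bipartite structure between fibers) and your separate treatment of the $\w\in\{0,1\}$ cases and of the lifting direction supply detail the paper leaves implicit rather than a genuinely different method.
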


\begin{proof}
For any vertex $u\in V(ED)$, we have $d^+_{ED}(u)\equiv d^+_D(u)\pmod2$, $d^-_{ED}(u)\equiv d^-_D(u)\pmod2$ since each $s_i$ is odd. Therefore we have  $V'_i\subseteq V_i$ and $|V_i|\equiv |V'_i|\pmod2$.
%
Thus $|V(K)\cap V_i|\equiv |V(K')\cap V'_i|\pmod2$  and $|V(K)\cap V_i|=0$ if and only if $|V(K')\cap V'_i|=0$ for $i=2,3$.
By Theorem~\ref{dkv23}, $\w(ED)=\w(D)$.
\end{proof}

In the following, let $D$ be a tournament $T$, $Q_1=\bigcup_{i=1}^\ell I_i$ and $Q_2=\bigcup_{i=\ell+1}^n I_i$. Then we have $V(ET)=Q_1\cup Q_2$. Denote the order of $ET$, $Q_1$ and $Q_2$ by $q$, $q_1$ and $q_2$, respectively. Obviously, we have $q=q_1+q_2$ and $d_{ET}(u)=q-s_i$ for any $u\in I_i$.


\begin{lem}\label{V2}
Let $V_2$ be the vertex set of $PS(ET)$ as defined before, then the cardinality of $V_2$ is always even.
\end{lem}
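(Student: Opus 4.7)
The plan is to compute $|V_2|$ modulo $2$ by exploiting the fact that all copies of a blown-up vertex have identical in- and out-degrees in $ET$, and then to do a case analysis on the parity of $q = |V(ET)|$. First I would observe that for every $u \in I_i$ the semi-degrees are the same, namely $d^+_{ET}(u) = \sum_{j \in N^+_T(v_i)} s_j$ and $d^-_{ET}(u) = \sum_{j \in N^-_T(v_i)} s_j$, so the ``type'' of $u$ with respect to the partition $V_1,V_2,V_3$ depends only on $i$. In particular, letting $c_i \in \{0,1,2\}$ be the number of halves of a single $u \in I_i$ that land in $V_2$, I have
\[
|V_2| \;=\; \sum_{i=1}^{n} s_i\, c_i.
\]

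Next I would split the sum according to the parity of $s_i$. For $i > \ell$ the factor $s_i$ is even, so these terms contribute $0 \pmod 2$ and
\[
|V_2| \;\equiv\; \sum_{i=1}^{\ell} c_i \pmod 2.
\]
The key observation is that $c_i$ is odd precisely when exactly one of $d^+_{ET}(u),\, d^-_{ET}(u)$ is odd (i.e.\ when $u \notin V_1$ and one half lies in $V_2$, the other in $V_3$), which is equivalent to $d^+_{ET}(u) + d^-_{ET}(u)$ being odd. Since this sum equals $q - s_i$, the condition becomes $q \not\equiv s_i \pmod 2$.

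Now I would do the case analysis. If $q$ is odd, then for every $i \le \ell$ (where $s_i$ is odd) we have $q - s_i$ even, so every $c_i$ is even and $|V_2| \equiv 0 \pmod 2$ immediately. If $q$ is even, then for every $i \le \ell$ we have $q - s_i$ odd, hence every $c_i$ is odd, and therefore $|V_2| \equiv \ell \pmod 2$. To finish this case I would note that $q = q_1 + q_2$ where $q_2 = \sum_{i>\ell} s_i$ is a sum of even numbers, so $q$ even forces $q_1 = \sum_{i=1}^\ell s_i$ even; but each $s_i$ with $i \le \ell$ is odd, so $q_1 \equiv \ell \pmod 2$, giving $\ell$ even as well.

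The only place requiring any care is the bookkeeping of which halves of a vertex $u \in I_i$ actually appear as vertices of $PS(ET)$ (a vertex in $V_1$ contributes zero halves to $V_2 \cup V_3$, not two), but because my expression $c_i \in \{0,1,2\}$ already accounts for this, no extra subtlety arises. The ``main obstacle'' is really just making sure the parity reduction $|V_2|\equiv \sum_{i\le \ell} c_i$ is correctly linked to the arithmetic identity $d^+_{ET}(u)+d^-_{ET}(u)=q-s_i$; once that bridge is in place, both parity cases close cleanly.
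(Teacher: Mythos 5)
Your proof is correct and follows essentially the same route as the paper: both arguments reduce to counting, for each vertex, how many of its halves land in $V_2$ (0 or 2 when $d^+_{ET}(u)+d^-_{ET}(u)=q-s_i$ is even, exactly 1 when it is odd) and then closing the two parity cases on $q$ via $q=q_1+q_2$ with $q_2$ even. Your bookkeeping by blow-up classes $I_i$ with the quantities $c_i$ is just a mild repackaging of the paper's per-vertex count, and the step $|V_2|\equiv \ell \equiv q_1 \pmod 2$ matches the paper's $|V_2|\equiv q_1 \pmod 2$ exactly.
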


\begin{proof}
First suppose that $q$ is even. If $u\in Q_1$, then $u$ attributes 1 to $|V_2|$ as $d_{ET}(u)$ is odd. Otherwise, either $u\in V_1$ or $u$ attributes 2 to $|V_2|$. Therefore $|V_2|\equiv q_1\pmod2$ is even as $q_1=q-q_2$ is even.
Now suppose that $q$ is odd. If $u\in Q_1$, then either $u\in V_1$ or $u$ attributes 2 to $|V_2|$ as $d_{ET}(u)$ is even. Otherwise, $u$ attributes 1 to $|V_2|$. Therefore $|V_2|\equiv q_2\pmod2$ is even as $q_2$ is even.
\end{proof}

\begin{thm}\label{ETle2}
	If $|V(T)|\le 3$, then $\w(ET)\le2$.
\end{thm}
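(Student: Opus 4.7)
The plan is to apply Theorem~\ref{dkv23}, which reduces $\w(ET)\le 2$ to checking that every nontrivial component $K$ of $PS(ET)$ satisfies $|V(K)\cap V_2|$ even or $V(K)\cap V_3\neq\emptyset$. The case $|V(T)|=1$ is immediate since $ET$ is then edgeless.

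For $|V(T)|=2$, write the unique arc as $v_1v_2$; then every arc of $ET$ goes from $I_1$ to $I_2$, no vertex has both semi-degrees positive, and so $PS(ET)=BG(ET)$. The unique nontrivial component $K$ is the $K_{s_1,s_2}$ on $I_1^+\cup I_2^-$, while $I_1^-\cup I_2^+$ are isolated. I run through the four parities of $(s_1,s_2)$: if both are odd, $V(K)\cap V_2=\emptyset$; if both are even, $|V(K)\cap V_2|=s_1+s_2$ is even; and if exactly one is odd, the side with odd degree lies in $V_3$, so $V(K)\cap V_3\neq\emptyset$.

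For $|V(T)|=3$, $T$ is either the transitive or the cyclic triangle. In the cyclic case, $BG(ET)$ is the disjoint union of three bipartite blocks $C_{ij}=K_{s_i,s_j}$ spanned by $I_i^+\cup I_j^-$, one for each arc $v_iv_j$. In the transitive case the blocks share vertices and $BG(ET)$ has a single nontrivial component on $I_1^+\cup I_2^+\cup I_2^-\cup I_3^-$ together with isolated $I_1^-\cup I_3^+$. Passing to $PS(ET)$, a vertex $u\in I_k$ is identified exactly when both $d^+_{ET}(u)$ and $d^-_{ET}(u)$ are odd; such identification merges the bipartite blocks containing $I_k^+$ and $I_k^-$. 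For each of the eight parity patterns of $(s_1,s_2,s_3)$ I describe the component structure of $PS(ET)$ and tally the $V_2$- and $V_3$-contributions from each $I_k$ using the induced degrees. Lemma~\ref{V2} shortens the bookkeeping, since the global evenness of $|V_2|$ implies that $|V(K)\cap V_2|$ has the same parity as the number of $V_2$-vertices outside $K$.

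The main obstacle is the cyclic subcase in which exactly one $s_i$ is even. There $I_i$ is the unique identified set, since identification at $I_k$ requires both $s_{k-1}$ and $s_{k+1}$ odd (indices mod $3$) and this forces $k=i$; consequently the two blocks $C_{i-1,i}$ and $C_{i,i+1}$ merge into a single block $K'$ while the third block $C'$ remains intact. I check that the non-identified sides of $K'$ contribute $s_{i-1}$ and $s_{i+1}$ to $V(K')\cap V_2$, both odd, so $|V(K')\cap V_2|$ is even; and that both sides of $C'$ have odd degree, so $V(C')\cap V_2=\emptyset$. The transitive subcase is analogous but simpler, since only $I_2$ can ever be identified (and only when both $s_1$ and $s_3$ are odd); in the subcases where $s_1+s_3$ is odd, a vertex of $I_2^+$ or $I_2^-$ furnishes the needed $V_3$-witness in $K$. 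Once every subcase is verified, Theorem~\ref{dkv23} yields $\w(ET)\le 2$.
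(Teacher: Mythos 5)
Your proposal is correct and follows essentially the same route as the paper: both reduce the claim to Theorem~\ref{dkv23} and analyze the component structure of $PS(ET)$ case by case according to the parities of $s_1,\dots,s_n$ and whether the triangle is cyclic or transitive (the paper organizes the cases by $\ell$ and dispatches the all-odd case via Theorem~\ref{sodd}, but the bookkeeping is the same). The subcases you verify explicitly, including the cyclic triangle with exactly one even $s_i$, check out, and the remaining parity patterns you leave as routine are indeed routine.
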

\begin{proof}
If $T=K_1$, then $\w(ET)=0$. Suppose that $T=K_2=v_1v_2$. If both $v_1$ and $v_2$ are in $Q_1$, then $\w(ET)=1$. Otherwise, the only nontrivial component $PS(ET)$  satisfies  Theorem~\ref{dkv23}, then $\w(ET)\le2$.

Now suppose that $|V(T)|=3$. Suppose $\ell=3$, i.e., $s_1$, $s_2$ and $s_3$ are all odd. Then  $\w(ET)=\w(T)=1$ by Theorem~\ref{sodd}. It suffices to consider the following two cases under $\ell\leq 2$.

{\bf Case 1.} $T$ is a dicycle and $A(T)=\{v_1v_2,v_2v_3,v_3v_1\}$.

 Suppose $\ell=2$.
Then $PS(ET)$ has two nontrivial components $K$ and $R$ with $V(K)=I^+_1\cup I^-_2$ and $V(R)=I^-_1\cup I^+_2\cup I_3$.  Suppose $\ell\leq 1$. Then $PS(ET)$ has three nontrivial components $K, R, S$ such that $V(K)=I^+_1\cup I^-_2$, $V(R)=I^+_2\cup I^-_3$, $V(S)=I^+_3\cup I^-_1$. If $\ell=2$, then we have $V(K)\cap V_3\neq\emptyset$ and $V(R)\cap V_2$ is of even size. If $\ell=1$, then we have $V(R)\cap V_2$ is of even size and $V(F)\cap V_3\neq\emptyset$ for $F\in \{K, S\}$.
If $\ell=0$, then we have $V(F)\cap V_2$ is of even size for $F\in \{K, R, S\}$. Hence, by Theorem~\ref{dkv23}, $\w(ET)\le 2$.

{\bf Case 2.} $T$ has two peripheral vertices.

Let $v_i$ and $v_j$ be the source and the sink of $T$, respectively. Set  $\{i,j,t\}=[3]$. Then $PS(ET)$ has one nontrivial component $K$ such that $V(K)=I^+_i\cup I^-_j\cup I_t$ when $s_i$ and $s_j$ are odd, and $V(K)=I^+_i\cup I^-_j\cup I^+_t\cup I^-_t$ otherwise.
If $s_i+s_j$ is even, then $V(K)\cap V_2$ is of even size. Otherwise,
$V(K)\cap V_3\neq\emptyset$. Hence, by Theorem~\ref{dkv23}, $\w(ET)\le2$.
\end{proof}

In the following, we consider the case that $|V(T)|>3$. An extended tournament $ET$ with $|V(T)|>3$ is called \emph {bad} if all the following conditions are satisfied.
\begin{itemize}
\item[(a)] $ET$ is of odd order;
\item[(b)] Exactly one independent set is even, i.e., $|I_n|$ is even;
\item[(c)] $N^+_{T}(v_n)$ dominate $N^-_{T}(v_n)$;
\item[(d)] Either $|N^-_T(v_n)|=1$ or $|N^+_T(v_n)|=1$.
\end{itemize}

\noindent We call every other extended tournament \emph {good}.

\begin{thm}\label{BTB3O2}
For $n>3$ and $\ell<n$, we have $\w(ET)=3$ when $ET$ is bad and $\w(ET)\le2$ otherwise.
\end{thm}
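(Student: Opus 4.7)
The plan is to apply Theorem~\ref{dkv23} in both directions. For the bad case I will exhibit a component of $PS(ET)$ that violates its $V_2/V_3$ condition, forcing $\w(ET) \ge 3$; combined with the general bound $\w(ET) \le 3$, this gives $\w(ET) = 3$. For the good case I verify Theorem~\ref{dkv23}'s condition on every nontrivial component of $PS(ET)$, yielding $\w(ET) \le 2$.

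For the bad case, without loss of generality assume $|N^-_T(v_n)| = 1$ (the case $|N^+_T(v_n)| = 1$ is handled by arc-reversal, since the reverse of a bad $ET$ is again bad). Let $v_j$ be the unique in-neighbor of $v_n$. Condition (c) forces every $v_k \in A$ to satisfy $v_k \to v_j$, so $v_j$'s only out-neighbor is $v_n$. Thus $T$ decomposes as $v_n \to A \to v_j \to v_n$ with $|A| = n-2$, and since $\ell = n-1$ is odd (so that $|V(ET)|$ is odd), $n$ is even and $|A|$ is even. A direct parity computation using $p_i \equiv |N^+_T(v_i) \setminus \{v_n\}| \pmod 2$ (and the analogous formula for $q_i$) yields $p_j = q_j = 0$, $p_n = 0$, $q_n = 1$, and $p_k = q_k$ for every $v_k \in A$. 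Because $v_j$'s only out-neighbor is $v_n$ and $v_n$'s only in-neighbor is $v_j$ (and neither index lies in $V_1$), the set $I_j^+ \cup I_n^-$ forms an isolated complete bipartite component $K_0 \cong K_{s_j, s_n}$ of $PS(ET)$; it satisfies Theorem~\ref{dkv23}'s condition since $I_n^- \subseteq V_3$. The remaining vertices form a single connected component $K_1$, linked through the paths $v_n \to v_k \to v_j$ (for $v_k \in A$) and the tournament $T[A]$. On $K_1$, the conditions $p_k = q_k$ for $v_k \in A$ together with $I_n^+, I_j^- \subseteq V_2$ give $V_3 \cap K_1 = \emptyset$; meanwhile $|V_2 \cap K_0| = s_j$ is odd, so by Lemma~\ref{V2} (which states that $|V_2|$ is even) $|V_2 \cap K_1|$ is also odd. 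Hence $K_1$ violates Theorem~\ref{dkv23}'s condition, giving $\w(ET) \ge 3$.

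For the good case, at least one of (a)--(d) fails. The unifying tool is Lemma~\ref{V2}: if $PS(ET)$ has a single nontrivial component $K$, then $|V_2 \cap K| = |V_2|$ is even and Theorem~\ref{dkv23}'s condition holds automatically. Thus it suffices to examine when $PS(ET)$ has multiple nontrivial components and verify the condition on each. An isolated bipartite piece of $PS(ET)$ can arise only when some $v_i$ has unique out-neighbor $v_j$, $v_j$ has unique in-neighbor $v_i$, and neither index lies in $V_1$; the calculation above shows such an isolated piece yields a violating complementary component precisely in the configuration matching the bad definition. I will then perform case analysis on which of (a)--(d) fails: if (a) or (b) fails, the altered parities prevent the isolated piece from forcing badness; if (c) fails, the back-arc from $B$ to $A$ contributes extra edges in $PS(ET)$ that merge the would-be-isolated piece with the rest; if (d) fails, the multiplicity of $A$ or $B$ destroys the ``unique neighbor'' condition needed for isolation. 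The hardest subcases will be those where only (c) or (d) fails (and $ET$ is structurally very close to bad), where I must track carefully how the small deviation changes at least one parity $p_i$ or $q_i$---thereby either introducing a $V_3$-vertex into the large component, or merging components so that Lemma~\ref{V2} applies.
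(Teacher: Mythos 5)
Your argument for the bad case is sound and is essentially the paper's computation carried out on the converse orientation: you isolate the complete bipartite component $I_j^+\cup I_n^-$, show $V_3$ is confined to it, and use Lemma~\ref{V2} to force an odd $V_2$-count on the complementary part. (Your assertion that the remainder is a \emph{single} connected component is under-justified, but it is also unnecessary: since $V_3$ avoids the remainder entirely and no vertex of a bad $ET$ has a zero semi-degree, some nontrivial component of the remainder must carry an odd $V_2$-count, which already violates Theorem~\ref{dkv23}.)

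The good case, however, is where the real content of the theorem lies, and there your proposal has two genuine problems. First, your ``unifying tool'' is false as stated: if $PS(ET)$ has a single nontrivial component $K$, it does \emph{not} follow that $|V(K)\cap V_2|=|V_2|$, because isolated vertices of $PS(ET)$ also lie in $V_2$ (degree $0$ is even and such vertices cannot be re-identified). Isolated vertices arise exactly when $T$ has a source or sink, say a sink $v_j$, in which case $I_j^+$ is a set of $s_j$ isolated $V_2$-vertices; when $s_j$ is odd, $|V(K)\cap V_2|=|V_2|-s_j$ is odd and your parity shortcut gives nothing. These peripheral-vertex cases must instead be settled by exhibiting a $V_3$-vertex inside $K$ (this is precisely what the paper does, splitting on whether $q$ is odd and whether $v_j\in Q_1$ or $Q_2$), and your proposal never supplies that argument. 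Second, the remainder of the good case is a plan, not a proof: the structural claim that every additional nontrivial component of $PS(ET)$ is a ``unique out-neighbour / unique in-neighbour'' bipartite piece is asserted without proof, and the case analysis on which of (a)--(d) fails is announced (``I will then perform case analysis\dots I must track carefully\dots'') but not executed. The subcases you yourself flag as hardest --- only (c) or (d) failing, and the configurations with several even classes or with the even class at a vertex of large in- and out-degree --- are exactly where the paper expends most of its effort (its Claims 1 and 2), so the omission is not cosmetic. As it stands, only the lower bound half of the theorem is proved.
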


\begin{proof}
Recall that $|V(ET)|=q$. First consider the case that $ET$ is bad. Without loss of generality, let $|N^+_T(v_n)|=1$ and $N^+_{T}(v_n)=v_i$.
Then $d^-_{ET}(v_i)=s_n$ and $d^+_{ET}(v_i)=d^-_{ET}(v_n)=q-s_n-s_i$ are even, and $d^+_{ET}(v_n)=s_i$ is odd. And for $u\in N^-_{ET}(v_n)$, $d_{ET}(u)$ is even, we have either $u\in V_1$ or $u$ attributes 2 to $|V_2|$.
Observe that $PS(ET)$ contains exactly two components $K$ and $R$ with $V(K)=I^+_n\cup I^-_i$ and $V(R)=V(PS(ET))\setminus V(K)$. Note that $I^-_i\subseteq V_2$, $|I^-_i|$ is odd and $V_3=I^+_n$. So, we have $|V(R)\cap V_2|$ is odd by Lemma \ref{V2}  and $V(R)\cap V_3=\emptyset$ as $V_3\subseteq V(K)$. Therefore, by Theorem~\ref{dkv23}, $\w(ET)=3$.

Now consider the case that $ET$ is good. We proceed our proof by considering the number of peripheral vertices in $T$.

First suppose that $T$ has a source $v_i$ and a sink $v_j$. Then $PS(ET)$  consists exactly one nontrivial component $K$ with $V(K)=V(PS(ET))\setminus (I^-_i\cup I^+_j)$.
If $v_i,v_j\in Q_1$, then $V(K)\cap V_3\neq\emptyset$ when $q$ is even and $V(K)\cap V_2$ is of even order or $V(K)\cap V_3\neq\emptyset$ when $q$ is odd.
Consider without loss of generality that $v_i\in Q_1$ and $v_j\in Q_2$. If $q$ is even, then $I^+_i\subseteq V_3$, otherwise $I^-_j\subseteq V_3$.
Now $v_i,v_j\in Q_2$. By Lemma \ref{V2}, we have $V(K)\cap V_2$ is of even order. Therefore, by Theorem~\ref{dkv23}, $\w(ET)\le2$.

Next suppose that $T$ has a peripheral vertex $v_j$, without loss of generality, we say $v_j$ is a sink, then we have $V(PS(ET))=V(K)\cup I^+_j$, where $K$ is a nontrivial component of $PS(ET)$. Assume that $v_j\in Q_1$. If $q$ is odd, then $\emptyset\neq V_3\subseteq V(K)$ as $Q_2\neq\emptyset$.
Otherwise, $V(K)\cap V_3\neq\emptyset$ as $I^-_j\subseteq (V_3\cap V(K))$. If $v_j\in Q_2$, then $K$ is satisfied to Theorem~\ref{dkv23} by Lemma \ref{V2}. Therefore, $\w(ET)\le2$.

Finally suppose that $\delta^0(T)\ge1$. We choose a vertex $v_i$ from $Q_2$ such that $A(N^-_T(v_i),$ $N^+_T(v_i))\neq\emptyset$, otherwise, let $v_i$ be any vertex in $Q_2$. Now we present a vertex partition $X\cup U\cup W$ of $PS(ET)$ with respect to $v_i$. Let $X=I_i$ if $v_i\in V_1$, otherwise, let $X=X_1\cup X_2$ with $X_1=I^+_i$ and $X_2=I^-_i$. Define $U=U_1\cup U_2\cup U_3$ and $W=W_1\cup W_2\cup W_3$ as follows.
$$
\begin{aligned}
	U_1&=\{u^+\colon u\in N^-_{ET}(v_i)\setminus V_1\},~
	U_2=\{u^-\colon u\in N^-_{ET}(v_i)\setminus V_1 \},~
	U_3=N^-_{ET}(v_i)\cap V_1;\\
	W_1&=\{w^+\colon w\in N^+_{ET}(v_i)\setminus V_1\},~
	W_2=\{w^-\colon w\in N^+_{ET}(v_i)\setminus V_1\},~
	W_3=N^+_{ET}(v_i)\cap V_1;\\	
\end{aligned}
$$

Suppose $PS(ET)$ is a connected graph. Then  we have $V_2$ is of even order or $V_3$ is nonempty by Lemma \ref{V2}. Hence, by Theorem~\ref{dkv23}, $\w(ET)\le2$.
So we consider  $PS(ET)$ is not a connected graph in the following.
If both $U_1$ and $W_1$ are empty sets, then $U_3$ and $W_3$ are nonempty. Since $T$ is a tournament, there are edges between $U_3$ and $W_3$. Thus $PS(ET)$ is a connected graph.
Therefore, without loss of generality, we may assume that $U_1\neq \emptyset$. Let $K$ be the nontrivial component of $PS(ET)$ that contains $U_1$. It suffices to show the following two claims.

{\bf Claim 1} If  $v_i$ satisfies $A(N^-_T(v_i),N^+_T(v_i))\neq\emptyset$, then $\w(ET)\le2$.

\noindent {\bf Proof.} We have $U_1\cup U_3\cup X\cup W_3\cup W_2\subseteq V(K)$.
If $U_2\subseteq V(K)$, then $PS(ET)$ is connected because  $\delta ^0(ET)\ge1$ and $W_1$ (if exists) is not an independent set of $PS(ET)$. So we
assume that there exists a vertex $v^-_j\in U_2$ such that $v^-_j\notin V(K)$, then $v_j$ is a source of the subdigraph of $ET$ induced by $N^-_{ET}(v_i)$.  Since $d^-_T(v_j)>0$, there must be a vertex $v^+_t\in W_1$ such that $v^+_tv^-_j\in E(PS(ET))$ and $v^+_t\notin V(K)$. We have $v_t$ is a sink of subdigraph of $ET$ induced by $N^+_{ET}(v_i)$. Thus $PS(ET)$ has two nontrivial components $K$ and $R$ with $R=I^-_j\cup I^+_t$.
If both $s_j$ and $s_t$ are even, then $V(R)\cap V_2$ is of even size and  so $V(K)\cap V_2$ is of even size by Lemma~\ref{V2}.
If both $s_j$ and $s_t$ are odd, then $V(R)\cap V_3\neq\empty$ and $V(K)\cap V_2$ is of even size by Lemma~\ref{V2}.  If exactly  one of $s_i$ and $s_j$ is odd, then $V(R)\cap V_3\neq\emptyset$, and $V(K)\cap V_3\neq\emptyset$ because $d_{ET}(v_i)=q-s_i$, $s_i$ is even and $d_{ET}(v^+_j)=d_{ET}(v^-_t)=q-s_j-s_t$.
Hence, by Theorem~\ref{dkv23}, $\w(ET)\le2$.\qed

{\bf Claim 2} If $A(N^-_T(v),N^+_T(v))=\emptyset$ for each $v\in Q_2$, then $\w(ET)\le2$.\\
\noindent {\bf Proof.}
If $|N^+_T(v_i)|\ge2$ and $|N^-_T(v_i)|\ge2$, then  $PS(ET)$ is a connected graph.
So without loss of generality, we assume $|N^-_T(v_i)|\ge2$ and $|N^+_T(v_i)|=1$. If $v_i\in V_1$ or  $W_3\neq\emptyset$, then  $PS(ET)$ is a connected graph. So we assume
$v_i\notin V_1$ and $W_3=\emptyset$ in the following.
Let $v_j\in N^+_{ET}(v_i)$.
Thus $PS(ET)$ has two nontrivial components  $K$ and $R$ with $R=I^+_i\cup I^-_j$.
If $s_j$ is even, then we have  $V(R)\cap V_2$ is of even size and so $V(K)\cap V_2$  is of even size by Lemma \ref{V2}.
If $s_j$ is odd, then $q$ is odd because $v_i\notin V_1$.
Since $ET$ is good, there is a vertex $u\in Q_2$ with $u\notin I_i$.
Then $V(R)\cap V_3=I^+_i$ and $V(K)\cap V_3\neq \emptyset$ as $d_{ET}(u)$ is odd and $\{u^+,u^-\}\subseteq V(K)$. Again by Theorem~\ref{dkv23}, $\w(ET)\le2$.
  
This completes the proof of Theorem \ref{BTB3O2}.
\end{proof}

\begin{thm}\label{ET=123}
For any extended tournament $ET$, it holds that
$$\w(ET)=\left\{
\begin{array}{rcl}
	0 & &\text{if $T=K_1$},\\
	1& &\text{if $\ell=n$ and $\w(T)=1$,}\\
	3 & &\text{if $\ell=n$ and $\w(T)=3$ or $ET$ is bad},\\
	2& &\text{otherwise}.\\
\end{array}
\right.
$$
\end{thm}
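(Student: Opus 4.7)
The plan is to consolidate Theorems~\ref{sodd}, \ref{ETle2}, and \ref{BTB3O2} into a single case analysis that exhausts every extended tournament. First I would dispose of $T=K_1$, for which $ET=K_1$ has no arcs, giving $\w(ET)=0$ trivially.

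Next I would split into the two regimes $\ell=n$ and $\ell<n$. When $\ell=n$ (all blow-up sizes are odd), Theorem~\ref{sodd} gives $\w(ET)=\w(T)$, and Theorem~\ref{wt=3} then supplies the value of $\w(T)\in\{0,1,2,3\}$ in terms of the structure of $T$. I would also remark that condition (b) of the definition of bad requires an even-sized $I_n$, so $\ell=n$ automatically precludes $ET$ from being bad. Hence the subcases $\w(ET)=1$, $\w(ET)=3$, and $\w(ET)=2$ in the statement correspond exactly to $\w(T)=1$, $\w(T)=3$, and $\w(T)=2$.

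For $\ell<n$ I would split further on $|V(T)|$. If $|V(T)|\le 3$, then $ET$ is never bad (badness requires $|V(T)|>3$), and Theorem~\ref{ETle2} yields $\w(ET)\le 2$, placing $ET$ in the ``otherwise'' row. If $|V(T)|>3$, Theorem~\ref{BTB3O2} gives $\w(ET)=3$ precisely when $ET$ is bad and $\w(ET)\le 2$ otherwise, matching the statement up to this upper bound.

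What remains -- and this is what I expect to be the main obstacle -- is to sharpen ``$\le 2$'' to ``$=2$'' in the ``otherwise'' rows, i.e., to rule out $\w(ET)\le 1$. Since $T\ne K_1$ forces $ET$ to contain at least one arc, $\w(ET)\ge 1$ is immediate. Ruling out $\w(ET)=1$ reduces to exhibiting, for each such $ET$, some vertex $u\in I_i$ whose nonempty semi-cut has even size; equivalently, finding an index $i$ for which $\sum_{j\in N^+_T(v_i)}s_j$ or $\sum_{j\in N^-_T(v_i)}s_j$ is nonzero and even. I expect this parity bookkeeping to be dispatched by a short case analysis based on how $ET$ fails the hypotheses of the $\w(ET)=1$ row, invoking Theorem~\ref{dkv23} as needed to certify the lower bound from the structure of $PS(ET)$.
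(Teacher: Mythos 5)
Your consolidation of Theorems~\ref{sodd}, \ref{ETle2} and \ref{BTB3O2}, together with the observations that $\ell=n$ precludes badness and that badness requires $|V(T)|>3$, is exactly how the paper assembles the upper bounds and the value $3$; that part matches the paper and is fine. The genuine gap is the step you defer to ``a short case analysis'': proving $\w(ET)\ge 2$ in every ``otherwise'' row, i.e.\ that $\w(ET)=1$ forces $\ell=n$. The paper disposes of this in two lines (``Clearly, if there are $s_i$ and $s_j$ with different parity, then $\w(ET)\ge2$,'' plus the all-even case), but neither your promissory note nor the paper's ``clearly'' can be completed, because the claim being proved is false. Take $T$ the transitive triangle with arcs $v_1v_2, v_1v_3, v_2v_3$ and blow-up sizes $s_1=s_3=1$, $s_2=2$. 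Then every nonempty semi-cut of $ET$ has odd size: $d^+_{ET}(u)=s_2+s_3=3$ and $d^-_{ET}(u)=0$ for $u\in I_1$; $d^+_{ET}(u)=d^-_{ET}(u)=1$ for $u\in I_2$; $d^-_{ET}(u)=s_1+s_2=3$ and $d^+_{ET}(u)=0$ for $u\in I_3$. Hence the all-ones coloring is a weak-odd $1$-edge coloring and $\w(ET)=1$, yet $\ell=2<n=3$ and $ET$ is not bad, so both the theorem and your plan would place this digraph in the ``$=2$'' row.

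Your reduction itself is the right one: $\w(ET)=1$ if and only if $ET$ is nontrivial and every nonzero quantity $\sum_{j\in N^+_T(v_i)}s_j$, $\sum_{j\in N^-_T(v_i)}s_j$ is odd. But this condition is strictly weaker than ``$\ell=n$ and every semi-degree of $T$ is odd or zero,'' because a peripheral vertex of $T$ imposes only one parity constraint, which is what the example above exploits. So the promised ``parity bookkeeping'' cannot succeed as stated; the $\w(ET)=1$ case must be characterized by the semi-degree condition on $ET$ itself, and the ``otherwise'' row adjusted accordingly. Apart from this defect --- which your write-up shares with, and indeed inherits from, the paper's own proof --- your route is the paper's route.
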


\begin{proof}
By Theorem~\ref{sodd}, Theorem~\ref{ETle2} and Theorem~\ref{BTB3O2}, we only need to declare the case that $\w(ET)=1$. Clearly, if there are $s_i$ and $s_j$ with different parity, then $\w(ET)\ge2$.  By Theorem~\ref{sodd}, if each $s_i$ is odd, then $\w(ET)=\w(T)$. Therefore, in this case, $\w(ET)=1$ if and only if $\w(T)=1$. Now assume that each $s_i$ is even. We have for each vertex $v\in ET$, both $d^+_{ET}(v)$ and $d^-_{ET}(v)$ are even, then $\w(ET)\ge2$.
\end{proof}

\begin{prop}\label{expre}
If $\w(ET)=3$,  then $ET$ admits a 2-edge coloring such that condition ($\overrightarrow{{\rm WO}}$) is satisfied at each vertex apart from a prescribed vertex $x\in V(T)$.
\end{prop}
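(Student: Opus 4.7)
The plan is to extend the $S$-join construction of Proposition~\ref{propsdd} to $PS(ET)$, treating separately the two scenarios in Theorem~\ref{ET=123} that force $\w(ET)=3$: either (i) $\ell=n$ and $\w(T)=3$, or (ii) $ET$ is bad. In each scenario I construct $S\subseteq V(PS(ET))$ with $|S\cap C|$ even for every component $C$ of $PS(ET)$, take an $S$-join $H$ in $PS(ET)$, and color an arc of $ET$ with color~$1$ or~$2$ according to whether its $PS$-image lies in $H$. The set $S$ is calibrated so that the resulting 2-edge coloring satisfies $(\overrightarrow{\rm WO})$ at every vertex except the prescribed $x$.

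In scenario (i), Theorem~\ref{sodd} gives $V_3=\emptyset$ and shows that $PS(ET)$ consists of one nontrivial component $K$ together with the isolated vertices $I^+_y$, where $y$ is the unique peripheral (say sink) vertex of $T$. The structural setup is identical to that of a bad semicomplete digraph, so the $S$-choices of Proposition~\ref{propsdd} transfer verbatim: set $S=\{x\}\cup(V_2\setminus I^+_y)$ when $x\in V_1^{ET}$, $S=(V_2\setminus I^+_y)\setminus\{x^+\}$ when $x\notin V_1^{ET}\cup I_y$, and $S=(V_2\setminus I^+_y)\setminus\{x^-\}$ when $x\in I_y$. In each case $|S\cap K|$ is even and every isolated component is disjoint from $S$, so an $S$-join exists, and the WO verification proceeds as in Proposition~\ref{propsdd}.

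In scenario (ii), assume $N^+_T(v_n)=\{v_i\}$. The proof of Theorem~\ref{BTB3O2} yields two components $K$ with $V(K)=I^+_n\cup I^-_i$ and $R=V(PS(ET))\setminus V(K)$, with $V(K)\cap V_3=I^+_n\ne\emptyset$ while the badness is concentrated in $R$. The pivotal observation is that each $u\in I_n$ has $d^+_{ET}(u)=s_i$ odd and $d^-_{ET}(u)$ even, so under an $S$-join-based 2-coloring the condition $(\overrightarrow{\rm WO})$ at $u$ reduces to $u^-\in S$ and is independent of $u^+\in S$. Consequently I may freely adjust the $I^+_n$-part of $S$ to balance $K$. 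Concretely, pick any $w\in I_n$ and put $S_0=V_2\cup\{w^+\}$, so $|S_0\cap K|=s_i+1$ is even while $|S_0\cap R|=|V_2\cap R|$ stays odd. To repair $R$ and place the only WO failure at $x$, toggle one element of $V(R)$ as dictated by $x$: add $x$ to $S_0$ when $x\in V_1^{ET}$; remove $x^+$ when $x\in I_i$ or when $x\in I_j\setminus V_1^{ET}$ for some $j\notin\{i,n\}$; remove $x^-$ when $x\in I_n$. In every case both $|S\cap K|$ and $|S\cap R|$ become even, and an $S$-join produces a 2-edge coloring violating $(\overrightarrow{\rm WO})$ exactly at $x$.

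The main obstacle is scenario (ii), where $I_n$-vertices straddle the two components of $PS(ET)$ and a careless parity correction on $R$ risks propagating a matching failure to $K$. The fact that WO at an $I_n$-vertex is determined entirely by its $V_2$-half decouples the two components for parity bookkeeping and supplies the single degree of freedom, namely the inclusion of one vertex of $I^+_n$ in $S$, needed to balance $K$ independently of $R$.
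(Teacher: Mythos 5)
Your proof is correct, and for the first scenario ($\ell=n$ and $\w(T)=3$) it is essentially the paper's argument: the same $F$-join construction inside the unique nontrivial component $K$ of $PS(ET)$ (you in fact correct the paper's slip there — the isolated halves are $I^+_i$, not $I^-_i$, for a sink $v_i$ — and you add the subcase $x\in I_y$ explicitly). For the bad case the two proofs genuinely diverge. The paper deletes the prescribed vertex $x$, checks that the resulting extended tournament is good (if $x\in Q_1$) or has all parts odd (if $x\in Q_2$), invokes Theorem~\ref{ET=123} resp.\ Theorem~\ref{sodd} to 2-color it, and re-attaches $x$ via rule {\bf (C)}. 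You instead work directly on the two components $K$ and $R$ of $PS(ET)$: the observation that every $u\in I_n$ has odd out-degree and even in-degree, so that ($\overrightarrow{\rm WO}$) at $u$ depends only on whether $u^-\in S$, lets you throw one vertex of $I^+_n=V_3\cap V(K)$ into $S$ "for free" to even out $|S\cap K|$, leaving a single toggle in $R$ to localise the failure at $x$; I checked the parity bookkeeping in all four positions of $x$ and it is sound. The paper's route is shorter given that Theorem~\ref{ET=123} and {\bf (C)} are already available; yours is self-contained at the level of $PS(ET)$, avoids re-invoking the full characterization, and makes explicit the one degree of freedom ($V_3\cap V(K)\neq\emptyset$) that forces ${\rm def}(ET)=1$ rather than something larger — in effect a hands-on instance of Theorem~\ref{DG-A}.
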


\begin{proof}
If $\ell=n$ and $\w(T)=3$, then we have $|V(T)|$ is odd and $T$ has a peripheral vertex, say a sink $v_i$, by Theorem~\ref{wt=3}. Hence, $q=|V(ET)|$ is odd. 
Observe that  $PS(ET)$ has exactly one nontrivial component $K$ with $V(PS(ET))=V(K)\cup I^-_i$. If $x\in V_1$, then let $F=\{x\}\cup (V_2\setminus I^-_i)$. If $x^+\in V_2$, then let $F=V_2\setminus (\{x^+\}\cup I^-_i)$. Take an $F$-join $H$ in $K$, and color $E(H)$ with color 1 and the rest edges of $K$ with color 2. The inherited 2-coloring fits the condition.

Suppose now $ET$ is bad. We have $|V(ET)|$ is odd and $\delta^0(T)\ge1$ by the definition. Then $\w(T)\le 2$ by Theorem~\ref{wt=3}. Let $ET'$ be the extended tournament obtained from $ET$ by deleting $x$. If $\w(ET')\le 2$, then we obtain a 2-edge coloring of $ET$ from $ET'$ by the coloring {\bf(C)} such that the condition ($\overrightarrow{{\rm WO}}$) is satisfied at each vertex apart from $x\in V(ET)$. Now we only need to show that $\w(ET')\le2$.
If $x\in Q_2$, then each independent set is odd in $ET'$. So $\w(ET')=\w(T)\le2$. If $x\in Q_1$, then we have $ET'$ is good. Therefore, by Theorem~\ref{ET=123}, we have $\w(ET')\le2$.
\end{proof}
By Proposition \ref{expre}, we have the following proposition directly.
\begin{prop}\label{propebt}
For any extended tournament $ET$, it holds that
$${\rm def}(ET)=\left\{
\begin{array}{rcl}
1 & &\text{if $\ell=n$ and $\w(T)=3$ or $ET$ is bad},\\
0& &\text{otherwise}.\\
\end{array}
\right.
$$
\end{prop}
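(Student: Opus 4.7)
The plan is to treat the two cases of the piecewise definition separately, leveraging the two statements already proved just above: Theorem \ref{ET=123} pins down $\w(ET)$ exactly, and Proposition \ref{expre} already produces, whenever $\w(ET)=3$, a 2-edge coloring with only one violating vertex. So the work reduces to combining these two facts with the definition of $\mathrm{def}$.

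In the ``otherwise'' case of the statement, Theorem \ref{ET=123} gives $\w(ET)\le 2$. Then by definition of the weak-odd chromatic index, $ET$ admits a weak-odd 2-edge coloring, i.e.\ a 2-edge coloring where condition ($\overrightarrow{\mathrm{WO}}$) holds at every vertex. Hence $\mathrm{def}(ET)=0$.

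In the remaining case, where either $\ell=n$ with $\w(T)=3$, or $ET$ is bad, Theorem \ref{ET=123} gives $\w(ET)=3$. Since no weak-odd 2-edge coloring of $ET$ exists, every 2-edge coloring fails ($\overrightarrow{\mathrm{WO}}$) at a least one vertex, so $\mathrm{def}(ET)\ge 1$. For the matching upper bound, Proposition \ref{expre} supplies (for any prescribed vertex $x$, in particular one particular choice) a 2-edge coloring of $ET$ such that ($\overrightarrow{\mathrm{WO}}$) holds at every vertex except $x$; thus $\mathrm{def}(ET)\le 1$, and equality follows.

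There is essentially no technical obstacle left here: the content is entirely packaged in Proposition \ref{expre}, whose proof already handled the two genuinely different subcases ($\ell=n$ with $\w(T)=3$ via an $F$-join construction in the unique nontrivial component of $PS(ET)$, and $ET$ bad via vertex deletion plus the coloring extension rule \textbf{(C)}). So the proof is short: restate the two cases, cite Theorem \ref{ET=123} for the value of $\w(ET)$, and cite Proposition \ref{expre} for the $\mathrm{def}(ET)\le 1$ bound in the $\w(ET)=3$ case.
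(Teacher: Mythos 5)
Your proposal is correct and matches the paper exactly: the paper derives Proposition \ref{propebt} ``directly'' from Proposition \ref{expre}, which is precisely your argument (Theorem \ref{ET=123} gives $\mathrm{def}(ET)=0$ when $\w(ET)\le 2$ and $\mathrm{def}(ET)\ge 1$ when $\w(ET)=3$, and Proposition \ref{expre} supplies the matching upper bound of $1$). You have simply spelled out the steps the paper leaves implicit.
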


\section{Weak-odd edge covering of tournaments}
Here we show that Question~\ref{cover} holds for tournaments.

\begin{prop}\label{wc2}\cite{CMR}
	Every tournament admits a 2-edge coloring such that condition ($\overrightarrow{{\rm WO}}$) is satisfied at each vertex apart from a prescribed vertex $v\in V(T)$.
\end{prop}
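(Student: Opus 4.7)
The plan is to split on the value of $\w(T)$. If $\w(T)\le 2$, any weak-odd 2-edge coloring of $T$ satisfies $(\overrightarrow{\rm WO})$ at every vertex, which trivially suffices for any prescribed $v$.

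Assume now $\w(T)=3$. By Theorem~\ref{wt=3}, $T$ is nontrivial, of odd order, and has exactly one peripheral vertex. Let $T'=T-v$; then $T'$ is a tournament on $|V(T)|-1$ vertices, so $|V(T')|$ is even. Because odd order is necessary for $\w=3$ in Theorem~\ref{wt=3}, we conclude $\w(T')\le 2$. Fix a weak-odd 2-edge coloring $\phi$ of $T'$ and extend $\phi$ to a 2-edge coloring $\varphi$ of $T$ via the rule \textbf{(C)} described at the end of Section~2.

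By the design of \textbf{(C)}, for every $u\in N_T(v)$ the color $i$ witnessing $(\overrightarrow{\rm WO})$ at $u$ under $\phi$ continues to witness $(\overrightarrow{\rm WO})$ at $u$ under $\varphi$: the newly coloured arc incident to $v$ at $u$ is assigned either so as to preserve the odd count of color $i$ in the relevant semi-cut (when that semi-cut is already nonempty in $T'$), or so as to supply a single occurrence of color $i$ (when that semi-cut is empty in $T'$). Vertices not adjacent to $v$ retain their $T'$-coloring unchanged and therefore still satisfy $(\overrightarrow{\rm WO})$. Hence only $v$ itself may fail, as required.

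The main, and essentially only, obstacle is the inequality $\w(T')\le 2$, and this is immediate from Theorem~\ref{wt=3} once we observe that $T'$ has even order. Unlike the proofs of Propositions~\ref{propsdd} and~\ref{propebt}, no delicate inspection of $PS(T)$ or the matching graph $G_T$ is needed here, because the extension \textbf{(C)} already produces a 2-coloring that localizes all possible failure of $(\overrightarrow{\rm WO})$ at the single vertex $v$.
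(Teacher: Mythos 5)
Your proof is correct: the only nontrivial case is $\w(T)=3$, where Theorem~\ref{wt=3} forces $T$ to have odd order, so $T'=T-v$ has even order and hence $\w(T')\le 2$, and the extension rule \textbf{(C)} then localizes any failure of $(\overrightarrow{\rm WO})$ at $v$. The paper cites this proposition from \cite{CMR} without reproducing a proof, but your argument is exactly the delete-and-extend-via-\textbf{(C)} scheme the paper itself uses for the analogous Propositions~\ref{propsdd} and~\ref{expre}, so it is essentially the intended approach.
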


\begin{thm}
Let $T$ be a tournament. If $\w(T)=3$, then $T$ admits a weak-odd 2-edge covering $\varphi$ such that the intersection of color classes is contained within a singleton arc in $A(T)$.
\end{thm}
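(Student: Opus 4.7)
The plan is to begin from a $2$-edge coloring whose only defect sits at a prescribed vertex and then repair that defect by doubling the color of exactly one arc, with at most a preparatory color-swap on one companion arc. By Theorem~\ref{wt=3}, $T$ is nontrivial, of odd order $n$, and has a unique peripheral vertex; after reversing all arcs if necessary, I may assume this vertex is a sink $y$, so that $d^-_T(y)=n-1$ is even. Proposition~\ref{wc2} then supplies a $2$-edge coloring $\phi\colon A(T)\to[2]$ under which every vertex except $y$ satisfies $(\overrightarrow{\rm{WO}})$; the failure at $y$ can only occur on $\partial^-_T(y)$, so both colors have even count there. The point is that doubling the color of any incoming arc $xy$ flips exactly one parity on $\partial^-_T(y)$ and automatically repairs $y$, so the task reduces to choosing $x$ (and possibly making one compensating move at $x$) so that $x$ itself is not broken.

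The straightforward case is when some $x\in N^-_T(y)$ has $d^+_T(x)$ even. Because $\phi$ satisfies $(\overrightarrow{\rm{WO}})$ at $x$ and $d^+_T(x)$ is even, both colors must occur an odd number of times on $\partial^+_T(x)$. Thus doubling $xy$ leaves the count of $\phi(xy)$ on $\partial^+_T(x)$ unchanged and $x$ remains satisfied, yielding the desired covering directly.

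The main obstacle is the opposite case, in which every $x\in N^-_T(y)$ has $d^+_T(x)$ odd (and therefore $d^-_T(x)$ odd too, since $d_T(x)=n-1$ is even). Here each such $x$ has a unique color $c(x)\in[2]$ that is odd on $\partial^+_T(x)$. If some $x\in N^-_T(y)$ satisfies $\phi(xy)=c(x)$, doubling $xy$ preserves the odd count of $c(x)$ on $\partial^+_T(x)$ and we are done. Otherwise, fix any $x\in N^-_T(y)$: since $c(x)$ has odd total count on $\partial^+_T(x)$ while the arc $xy$ (colored $3-c(x)$) contributes $0$ to this count, there must exist an out-neighbor $z\neq y$ of $x$ with $\phi(xz)=c(x)$. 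My plan is to swap the color of $xz$ from $c(x)$ to $3-c(x)$ and simultaneously double-color $xy$ to $\{1,2\}$. A routine parity check then shows that $x$ and $y$ become satisfied, and $z$ remains satisfied because $d^-_T(z)$ is odd, so a simultaneous flip of both color parities on $\partial^-_T(z)$ merely exchanges which color is the odd one. All other vertices are untouched, and the only arc receiving both colors is $xy$, so the intersection of the two color classes is the singleton $\{xy\}$, as required.
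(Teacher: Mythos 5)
Your overall strategy mirrors the paper's (start from the defect-one coloring of Proposition~\ref{wc2}, repair the sink $y$ by assigning both colors to a single arc, with at most one compensating recoloring), and your easy cases are fine: when some $x\in N^-_T(y)$ has $d^+_T(x)$ even, or when $\phi(xy)=c(x)$ for some $x$, doubling $xy$ works exactly as you say (in the first of these you should also note that $d^-_T(x)$ is even, so $\phi(xy)$ is odd on $\partial^-_T(x)$ as well; satisfaction of $(\overrightarrow{\rm{WO}})$ requires one color that is odd on \emph{both} semi-cuts).

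The genuine gap is in your main case, at the vertex $z$. There, since $y$ is a sink of a tournament, $N^-_T(y)=V(T)\setminus\{y\}$, so \emph{every} vertex other than $y$ (in particular $z$) has both semi-degrees odd and hence a unique witness color $c(z)$ that is odd on $\partial^+_T(z)$ and on $\partial^-_T(z)$. Your move recolors the single arc $xz\in\partial^-_T(z)$ and touches nothing in $\partial^+_T(z)$. This flips both color parities on $\partial^-_T(z)$, so the color that is now odd there is $3-c(z)$; but $3-c(z)$ has even count on the unchanged $\partial^+_T(z)$, while $c(z)$ has become even on $\partial^-_T(z)$. Hence no single color is odd on both semi-cuts of $z$, and $(\overrightarrow{\rm{WO}})$ fails at $z$: the observation that the flip ``merely exchanges which color is the odd one'' on $\partial^-_T(z)$ is true but does not rescue the in/out coupling. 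The paper avoids this by orienting the repair the other way: it picks $z$ with $zx\in A(T)$ and $\phi(zx)$ equal to the witness color of $x$, then sets $\varphi(zx)$ to the other color, doubles $\varphi(zy)=\{1,2\}$, and recolors $\varphi(xy)$. Both modified arcs at $z$ lie in $\partial^+_T(z)$ and their effects cancel (the parity of the witness color of $z$ on $\partial^+_T(z)$ is preserved), while $\partial^-_T(z)$ is untouched. To fix your argument you would need an analogous two-arc adjustment at $z$, e.g.\ also modifying an arc of $\partial^+_T(z)$, rather than the single swap on $xz$.
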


\begin{proof}
By Theorem~\ref{wt=3}, we have that $T$ is nontrivial, of odd order, and has just one peripheral vertex. Suppose that $T$ has a sink $y$. By Proposition~\ref{wc2}, $T$ admits a 2-edge coloring $\phi$ such that ($\overrightarrow{{\rm WO}}$) is satisfied at each vertex apart from $y$. Let $T'_i$ be the spanning subdigraph of $T$ whose arc set is the color set $\phi^{-1}(i)$ for $i\in [2]$. Then
both $d^-_{T'_1}(y)$ and $d^-_{T'_2}(y)$ are even.
If there is an arc $xy\in A(T)$ such that $\phi(xy)=i$ and ($\overrightarrow{{\rm WO}}$) is satisfied by color $i$ at $x$ for $i\in [2]$, then $\varphi$ can be given as follows: $\varphi(xy)=\{1,2\}$ and $\varphi=\phi$ for other arcs.
It is obvious to see that the condition ($\overrightarrow{{\rm WO}}$) is satisfied by color  $3-i$ at $y$ and others are taken care of by the same color  under $\phi$ for $i\in [2]$.

Now suppose that for any arc $xy\in A(T)$ and $i\in [2]$, if $\phi(xy)=i$, then only color $3-i$ satisfies the condition ($\overrightarrow{{\rm WO}}$)  at $x$. Without loss of generality, assume that there is an arc $xy\in A(T)$ with $\phi(xy)=1$, which implies that the condition ($\overrightarrow{{\rm WO}}$) is satisfied by color $2$ at $x$.
Then there is a vertex $z$ such that $zx\in A(T)$ with $\phi(zx)=2$. We can define $\varphi$ as follows: $\varphi(zx)=1$, $\varphi(zy)=\{1,2\}$, $\varphi(xy)=2$ and $\varphi=\phi$ for other arcs.
It is easy to check that any vertex in $V(T)\setminus \{x,y,z\}$ is taken care of by the same color as in $\phi$. Note that $T$ is a tournament of odd order, then $d^+_T(x)$ and $d^-_T(x)$ have the same parity. Combining that the condition ($\overrightarrow{{\rm WO}}$) is satisfied by color $2$ at $x$, we have  both $d^+_{T'_2}(x)$ and $d^-_{T'_2}(x)$ are odd whereas $d^+_{T'_1}(x)$ and $d^-_{T'_1}(x)$ are even. Let $T_i$ be the spanning subdigraph of $T$ whose arc set is the color set $\varphi^{-1}(i)$ for $i\in [2]$. We finish the proof by the following.

Assume that the condition ($\overrightarrow{{\rm WO}}$) is satisfied by color $i$ at $z$ under $\phi$ for $i\in[2]$.
 Then we have $\phi(zy)=3-i$, $d^+_{T'_i}(z)$ and $d^-_{T'_i}(z)$ are odd whereas $d^+_{T'_{3-i}}(z)$ and $d^-_{T'_{3-i}}(z)$ are even. Combining the coloring $\varphi$, we have $d^+_{T_1}(x)=d^+_{T'_1}(x)-1$, $d^-_{T_1}(x)=d^-_{T_1}(x)+1$, $d^+_{T_i}(z)=d^+_{T'_i}(z)+2(2-i)$, $d^-_{T_i}(z)=d^-_{T'_i}(z)$ and $d^-_{T_{3-i}}(y)=d^-_{T'_{3-i}}(y)-(-1)^i$ are odd.

%
%
%

Hence, we are done.
\end{proof}

\noindent{\bf Acknowledgements.} The authors would like to thank Yongtang Shi for interesting discussions and the anonymous referees for their reports which helped to improve the presentation of the paper. This work was partially supported by the National Natural Science Foundation of China and the Natural Science Foundation of Tianjin (Nos. 20JCJQJC00090 and 20JCZDJC00840).

\end{document}